\newtheorem{Exa}{Example}
\newtheorem{Th}{Theorem}[section]
\newtheorem{Def}[Th]{Definition}
\newtheorem{Rem}[Th]{\it Remark}
\newtheorem{Cor}[Th]{Corollary}
\newtheorem{La}[Th]{Lemma}
\newtheorem{Example}[Th]{Example}
\newcommand{\be}{\begin{eqnarray*}}
\newcommand{\ee}{\end{eqnarray*}}
\newcommand{\N}{\bf N}
\newcommand{\ra}{\rightarrow}
\newcommand{\IEJ}{\begin{Exa}}
\newcommand{\EEJ}{\end{Exa}}
\newcommand{\T}{\mathbb T}
\newcommand{\thi}{\begin{Th}}
\newcommand{\thf}{\end{Th}}
\newcommand{\coi}{\begin{Cor}}
\newcommand{\cof}{\end{Cor}}
\newcommand{\dfi}{\begin{Def}}
\newcommand{\dff}{\end{Def}}
\newcommand{\isoesa}{${\rm iso}\,\mathfrak{S}_a(\mathbb{T})$}
\newcommand{\esuf}{$\mathfrak{S}_{uf}(\mathbb{T})$} 
\newcommand{\esubf}{$\mathfrak{S}_{ubf}(\mathbb{T})$}
\newcommand{\esuw}{$\mathfrak{S}_{uw}(\mathbb{T})$} 
\newcommand{\esubw}{$ \mathfrak{S}_{ubw}(\mathbb{T})$}
\title{The property $(E_A)$ and   local spectral theory}
 \author{Elvis Aponte\thinspace$^{*,1}$,  Lourival Lima\thinspace$^{2}$ Jos\'e Sanabria\thinspace$^{3}$}
 \address{$^{1, 2}$ Departamento de Matem\'aticas, Facultad de Ciencias Naturales y Matem\'aticas, Escuela Superior Polit\'ecnica del Litoral (ESPOL), Campus Gustavo Galindo km. 30.5  V\'ia Perimetral, \mbox{Guayaquil EC090112, Ecuador.}}
\email{$^{1}$ ecaponte@espol.edu.ec\qquad ORCID: 0000-0002-5745-3233\newline $^{2}$ lourodri@espol.edu.ec\qquad ORCID: 0000-0001-6826-3315}
\address{$^{3}$ Departamento de Matem\'aticas, {Facultad} de Educaci\'on y Ciencias,  Universidad de Sucre, \mbox{Sincelejo, Colombia.}}
\email{$^{3}$  jesanabri@gmail.com\qquad ORCID: 0000-0002-9749-4099}
\subjclass[2020]{Primary 47A10, 47A11; Secondary  47A53, 47A55} \keywords{  Property $(E_A)$, property $(bz)$ semi-Fredholm operator,  SVEP. \newline \indent $^{*}$Corresponding author}
\date{}
\begin{document}
\maketitle
\normalsize

\begin{abstract}

In this paper, we introduce and study the spectral property  $(E_A)$. This property means that the difference between the approximate point spectrum and the upper semi-Fredholm spectrum coincides with the difference between the approximate point spectrum and the upper semi-Weyl spectrum. Together with local spectral theory, we explore the behavior of this property under certain topological conditions and derive characterizations for the operators that verify it. Furthermore, we establish sufficient conditions that guarantee that a bounded linear operator verifies the property $(E_A)$.

\end{abstract}
\section{ Introduction}

Local spectral theory of linear operators has applications in diverse domains of science; for example, in artificial intelligence, it is very valuable for the analysis of clustering and dimensionality reduction algorithms, as pointed out by \cite{Saul}. Additionally, in the field of massive data analysis, the study of eigenvalues becomes crucial when describing matrices, which, in turn, represent linear operators \cite{Chen}.
Among the research lines that have allowed a great development of local spectral theory are the studies relating the classical Weyl-type theorems \cite{weyl}-\cite{Cob}  and also Browder-type theorems \cite{Han}-\cite{AGP2}. These theorems are spectral properties that classify linear operators and are based on parts of the spectrum for their description.

Recently, studies have been carried out on the spectral properties that facilitate the analysis of the spectrum of a linear operator. For instance, property $(gaz)$  was studied in \cite{AAG} to describe different parts of the spectrum of an operator and in \cite{AJQV} to analyze the spectrum of a tensor product of two operators, this property is derived from the equality between the upper semi $B$-Weyl spectrum and the left Drazin invertible spectrum. Property $(V_{\Pi})$, which is a variant of the classical Browder's theorem, was explored in \cite{uuu} to describe parts of the spectrum and the quasi-nilpotent part of linear operators from a topological point of view. Property $(Bv)$ was considered in \cite{Tensor2022} to establish the conditions under which different parts of the spectrum of a tensor product operator coincide. There are more than forty spectral properties defined in the above style to describe parts of the spectrum of an operator as can be seen in reference \cite{SANA0b}. Although in the aforementioned studies, the operators involved are semi-Fredholm, the upper semi-Fredholm spectrum was not analyzed in these studies.  This shortcoming has been partially covered by property $(bz)$ that has been explored in references \cite{Ouidren222}-\cite{new2}, and it is equivalent to the equality between the upper semi-Fredholm spectrum and the upper semi-Browder spectrum. 

It is interesting to explore whether the local spectral theory yields new and different results when the upper semi-Fredholm spectrum is related to Weyl-type spectra, which has not been documented in the literature.  This leads to the enrichment of the theory of Fredholm-type operators.

The purpose of this manuscript is to explore the equality between the upper semi-Fredholm spectrum and the upper semi-Weyl spectrum of bounded linear operators. This equality is equivalent to analyzing a new spectral property, called property $(E_A)$.  We will use local spectral theory methods to obtain new descriptions of the upper semi-Fredholm spectrum. For this, we developed  three sections as follows:

In Section 2, we give the terminology to use and some known results.

In Section 3, we introduce the property $(E_A)$ and its generalized version, show that they are equivalent, and give some characterizations that allow us to see what the upper semi-Fredholm spectrum looks like.

In Section 4, we use some notions of topology and local spectral theory to give sufficient conditions for a bounded linear operator to verify property $(E_A)$.
 
\section{Preliminaries}

In this section, we will delineate the terminology employed throughout the article. Much of this terminology is derived from Aiena's book \cite{Aiena}. The symbol $\mathbb{C}$ denotes the space of complex numbers and $\mathbb{L}(\mathbb{X})$ represents the Banach algebra of all bounded linear operators on a complex Banach space $\mathbb{X}$ in itself. The set of accumulation points of $B\subseteq \mathbb{C}$ is denoted by $Acc(B)$.

 For $\mathbb{T} \in \mathbb{L}(\mathbb{X})$, $\alpha(\mathbb{T})$ denotes the dimension of $\text{ker}(\mathbb{T})$ (the kernel of $\mathbb{T}$), and $\beta(\mathbb{T})$ represents the co-dimension of $\mathbb{T}(\mathbb{X})$ (the range of $\mathbb{T}$). We  denoted by $p(T)$ the  \textit{ascent} of $\mathbb{T}$ and is defined as the smallest non-negative integer such that $\text{ker}(\mathbb{T}^{p(\mathbb{T})})= \text{ker}( \mathbb{T}^{p(\mathbb{T})+1})$. Observe that in some cases it is possible that $p(\mathbb{T})= \infty$. Similarly,  the \textit{descent} of $\mathbb{T}$ is represented by $q(\mathbb{T})$ and is defined as the smallest non-negative integer such that $\mathbb{T}^{q(\mathbb{T})}(X)= \mathbb{T}^{q(\mathbb{T})+1}(X)$. Occasionally, it may happen that $q(\mathbb{T})= \infty$.

In the following points, we look at the definition of some types of operators that are classics in the literature:
\begin{enumerate}
\item[(i)] An operator $\mathbb{T} \in \mathbb{L}(\mathbb{X})$ is said to be an \textit{upper semi-Fredholm}, if it has the closed range and $\alpha (\mathbb{T})< \infty$. In contrast, \textit{lower semi-Fredholm operators} are characterized by having $\beta (\mathbb{T})< \infty$.  Moreover, the class of \textit{Fredholm operators} is one whose elements are upper semi-Fredholm operators and at the same time are lower semi-Fredholm operators. 
\item[(ii)] The  \textit{index} of an operator $\mathbb{T} \in \mathbb{L}(\mathbb{X})$ is define by $ind(\mathbb{T}):=\alpha(\mathbb{T})-\beta(\mathbb{T})$. Thus, $\mathbb{T}$ is an \textit{upper semi-Weyl operator}, if it is an upper semi-Fredholm operator with $ind(\mathbb{T})\leq 0$,  and $\mathbb{T}$ is a \textit{lower semi-Weyl operator}, if it is a lower semi-Fredholm operator with $ind(\mathbb{T}) \geq 0$. Also, $\mathbb{T}$ is a \textit{Weyl operator}, if  $ind(\mathbb{T}) = 0$.
\item[(iii)] The concept of a Browder operator is centered on the ascent or descent of an operator $\mathbb{T}$. If $\mathbb{T} \in \mathbb{L}(\mathbb{X})$ has finite ascent and is upper semi-Fredholm, then it is classified as an \textit{upper semi-Browder operator}. On the other hand, if $\mathbb{T}$ shows finite descent and is lower semi-Fredholm, then it is termed a \textit{lower semi-Browder operator}. A \textit{Fredholm operator} $\mathbb{T}$ is designated as a \textit{Browder operator} when it exhibits both finite ascent and descent.
\item[(iv)] The operator $\mathbb{T}$ is said to be an  \textit{left Drazin invertible}, if $p(\mathbb{T}) <  \infty$ and  $\mathbb{T}^{p(\mathbb{T})+1}(X)$ is closed.
 \item[(v)]  $\mathbb{T} \in \mathbb{L}(\mathbb{X})$ is characterized as a \textit{$B$-Fredholm} (resp., \textit{upper semi $B$-Fred\-holm, lower semi $B$-Fredholm}) \textit{operator}, if for some integer $n\geq 0$, the range $\mathbb{T}^n (\mathbb{X})$ is closed, and $\mathbb{T}_{[n]}:= \mathbb{T}_{\mathbb{T}^n(\mathbb{X})}$ (the restriction of $\mathbb{T}$ to $\mathbb{T}^n(\mathbb{X})$) is a Fredholm (resp., upper semi-Fredholm, lower semi-Fredholm)  operator. We put $\mathbb{T}_{[0]} = \mathbb{T}$. The index of a $B$-Fredholm type operator is defined as $\text{ind} \ \mathbb{T} := \text{ind}\ \mathbb{T}_{[n]}$.  
 \item[(vi)]  $\mathbb{T} \in \mathbb{L}(\mathbb{X})$ is classified as a \textit{B-Weyl} (resp., \textit{upper semi B-Weyl}, \textit{lower semi B-Weyl}) \textit{operator}, if for some integer $n\geq 0$, $\mathbb{T}^n (\mathbb{X})$ is closed, and $\mathbb{T}_{[n]}$ is a Weyl (resp., upper semi-Weyl, lower semi-Weyl)  operator.
\end{enumerate}

Many classes of operators $\mathbb{T} \in  \mathbb{L}(\mathbb{X})$ can be characterized by their spectra. The spectrum of an operator $\mathbb{T}$ is defined as the set of complex numbers $\leftthreetimes$ for which the operator $\leftthreetimes \mathbb{I} - \mathbb{T}$ is not invertible in $\mathbb{L}(\mathbb{X})$. There are other spectra contained in the spectrum of an operator, for instance, if $\leftthreetimes$ is a scalar within the surjective spectrum of $\mathbb{T}$, then $\leftthreetimes \mathbb{I} - \mathbb{T}$ is not a member of the class of surjective operators in $\mathbb{L}(\mathbb{X})$.  Similarly, if $\leftthreetimes$ is a scalar within the punctual approximate spectrum of $\mathbb{T}$, then $\leftthreetimes \mathbb{I} - \mathbb{T}$ does not belong to the class of operators bounded below in $\mathbb{L}(\mathbb{X})$. It is important to mention that the bounded below operators in $\mathbb{L}(\mathbb{X})$ are those that have closed range and are injective. In the same way, for  $\mathbb{T} \in \mathbb{L}(\mathbb{X})$ different spectra are denoted as follows:
\begin{itemize} 
\item   $\mathfrak{S}({\mathbb{T}})$ as the spectrum of $\mathbb{T}$.
\item $\mathfrak{S}_{s}({\mathbb{T}})$ as the spectrum surjetive of $\mathbb{T}$.
\item $\mathfrak{S}_{a}({\mathbb{T}})$ as the approximate point spectrum. 
\item $\mathfrak{S}_{uf}({\mathbb{T}})$ as the upper semi-Fredholm spectrum.
\item $\mathfrak{S}_{lf}({\mathbb{T}})$ as the lower semi-Fredholm spectrum.
\item $\mathfrak{S}_{uw}({\mathbb{T}})$ as the upper semi-Weyl spectrum.
\item $\mathfrak{S}_{lw}({\mathbb{T}})$ as the lower semi-Weyl spectrum.
\item $\mathfrak{S}_{ub}({\mathbb{T}})$ as the upper semi-Browder spectrum. 
\item $\mathfrak{S}_{ld}({\mathbb{T}})$ as the left Drazin invertible spectrum.
\item  $\mathfrak{S}_{ubf}({\mathbb{T}})$ as the upper semi $B$-Fredholm spectrum.  
\item $\mathfrak{S}_{ubw}({\mathbb{T}})$ as the upper semi $B$-Weyl spectrum.
\end{itemize}
 
The classical {\em dual operator} of  $\mathbb{T} \in \mathbb{L}(\mathbb{X})$, over the space $\mathbb{X}^\ast: = \mathbb{L}(\mathbb{X}, \mathbb C)$ is defined by
$$
(\mathbb{T}^\ast f)(\mathbb{X}): = f(\mathbb{T}x) \quad \mbox{for all}\ x \in \mathbb{X} ,\, f \in \mathbb{X}^\ast.
$$

On the other hand, in \cite{Fi}, it is introduced the idea of the single-valued extension property, abbreviated as SVEP, is defined as the following: an operator  $\mathbb{T}\in \mathbb{L}(\mathbb{X})$ possesses the SVEP   at $\leftthreetimes_0 \in {\mathbb C}$,  if for all open disc $\mathbb D$ of  $\leftthreetimes_0$, the only analytic function $f: \mathbb D  \ra \mathbb{X} $ which satisfies the equation $(\leftthreetimes \mathbb{I} - \mathbb{T}) f(\leftthreetimes ) = 0$, for every $\leftthreetimes \in \mathbb D$, is the function $f \equiv 0$. It is said that  $\mathbb{T}$ possesses the SVEP if \,$\mathbb{T}$ possesses the SVEP for all $\leftthreetimes \in {\mathbb C}$.
 
 By   \cite[Theorem 3.8]{Aiena}, it turn out that if $p(\leftthreetimes \mathbb{I}-\mathbb{T})< \infty$, then $ \mathbb{T}$ possesses the SVEP at $\leftthreetimes$.
 
It is straightforward that if $\mathfrak{S}_{\rm a} (\mathbb{T})$ does not cluster at $\leftthreetimes$, then $\mathbb{T}$ possesses the SVEP at $\leftthreetimes$.

\begin{Rem} \label{Re1} {\rm  Both affirmations above are equivalent, when 
  $\leftthreetimes \mathbb{I } - \mathbb{T }$ is an operator of  Fredholm type. See \cite{AN}.  
  Note that $ \mathbb{T}$ possesses the SVEP in every isolated point of the spectrum, also in $\rho(\mathbb{T})={\mathbb C}\setminus \mathfrak{S}(\mathbb{T})$.
}
\end{Rem}

\section{The property $(E_A)$}

This section introduces property $(E_A)$ and its generalized version $(gE_A)$. In reality, these two properties are equivalent to each other, and classify the linear operators with the equality between the upper semi-Fredhlom spectrum and the upper semi-Weyl spectrum, a classification from which other relations and spectral properties are obtained. To get this, with   $\mathbb{T }\in \mathbb{L }(\mathbb{X})$, we define: 
 \begin{itemize}

\item $\delta_{uf}(\mathbb{T}):=\mathfrak{S}_a(\mathbb{T})\setminus \mathfrak{S}_{uf}(\mathbb{T})$.

 \item $\Delta_{uf}(\mathbb{T}):=\mathfrak{S}_a(\mathbb{T})\setminus \mathfrak{S}_{uw}(\mathbb{T})$.
 
 \item $\delta_{ubf}(\mathbb{T}):=\mathfrak{S}_a(\mathbb{T})\setminus \mathfrak{S}_{ubf}(\mathbb{T})$.

 \item $\Delta_{ubf}(\mathbb{T}):=\mathfrak{S}_a(\mathbb{T})\setminus \mathfrak{S}_{ubw}(\mathbb{T})$.

 \item $\Delta_{uw}(\mathbb{T}):=\mathfrak{S}(\mathbb{T})\setminus \mathfrak{S}_{uw}(\mathbb{T})$.
 
 \item $\mathcal{P}^{a}_{00}(\mathbb{T}):=\mathfrak{S}_a(\mathbb{T})\setminus \mathfrak{S}_{ub}(\mathbb{T})$.


      \item $\rho_{uf}(\mathbb{T}):= \mathfrak{S}(\mathbb{T})\setminus \mathfrak{S}_{uf}(\mathbb{T})$. 
\end{itemize}

 \begin{Example}
 Consider \(\mathbb{X} = \ell^2( \mathbb{N})\)  and let $\T\in \mathbb{X}$ be an operator defined by:
 \[
 \T(x_1, x_2, \dots) = (0, x_2, x_3, \dots ).
 \]
 Then  \(\mathfrak{S}_{a}(\T) = \{0, 1\}\) and \(\mathfrak{S}_{uf}(\T) = \{1\}\). Since
\(\alpha(\T) = 1\) and  \(\beta(\T) =  1\), it follows that \(0 \notin \mathfrak{S}_{uw}(\T )\). By the fact that \(\mathfrak{S}_{uf}(\T ) \subseteq \mathfrak{S}_{uw}(\T ) \subseteq \mathfrak{S}_{a}(\T )\), it turns out that  \(\mathfrak{S}_{uf}(\T ) = \mathfrak{S}_{uw}(\T )\) and thus, \(\delta_{uf}(\T) = \Delta_{uf}(\T)\).
\end{Example}

\begin{Example}
    Consider the zero operator $\T =\mathbf{0}$. Clearly, \(\leftthreetimes I - \T \) is invertible if and only if \(\leftthreetimes \neq 0\), so it is straightforward that \(\mathfrak{S}_{uf}(\T ) = \mathfrak{S}_{uw}(\T )=\mathfrak{S}_{w}(\T ) = \mathfrak{S}_{a}(\T ) = \{0\}\). On the other hand, we know that \(\mathfrak{S}_{ubf}(\T )\subseteq\mathfrak{S}_{ubw}(\T ) \subseteq \mathfrak{S}_{uf}(\T )\), and for any natural number \(n\), it holds that \(\mathbf{0}^n = \{0\}\), hence \(\alpha(\mathbf{0}_{[n]}) = \beta(\mathbf{0}_{[n]}) = 0\), whereby    
     \(\mathfrak{S}_{ubf}(\T ) = \mathfrak{S}_{ubw}(\T ) = \emptyset\), and therefore, \(\delta_{ubf}(\T) = \Delta_{ubf}(\T)\).
\end{Example}

These examples inspire us to investigate operators with similar spectral characteristics, for which we introduce the following definitions.

\begin{Def}   
Let  $\mathbb{T} \in \mathbb{L }(\mathbb{X})$. The operator $\mathbb{T}$ is said to verify
   property $(E_A)$ if  $\delta_{uf}(\mathbb{T})= \Delta_{uf}(\mathbb{T})$. It is said to verify property $(gE_A)$ if  $\delta_{ubf}(\mathbb{T})= \Delta_{ubf}(\mathbb{T})$.
 \end{Def}
 
Let $\mathbb{T} \in \mathbb{L }(\mathbb{X})$  possessing the SVEP for all $\leftthreetimes \in \delta_{uf}(\mathbb{T})$, so by Remark \ref{Re1}, for all $\leftthreetimes \in \delta_{uf}(\mathbb{T})$, it turn out that $p(\leftthreetimes I -\mathbb{T}) < \infty$, and so $ind(\leftthreetimes I -\mathbb{T}) \leq 0$,  whereby   $\delta_{uf}(\mathbb{T}) \subseteq \Delta_{uf}(\mathbb{T})$, hence $\mathbb{T}$  verifies  property $(E_A)$. Note that, the above also occurs if $T$ possesses the SVEP or satisfies any condition that implies that $T$ possesses the SVEP, for example, if $\mathfrak{S}_{a}(\T ) = \partial(\mathfrak{S}(\T ))$ (see \cite[Theorem 2.94]{Aiena}).

\begin{Example}\label{ejem11}
 Recall that the Ces\`{a}ro matrix $C$ is a lower triangular matrix such that the nonzero entries of the $n$-th row are $n^{-1}$, with $n \in \mathbb{N}$. Namely; 
$$
\left(\begin{array}{ccccc}
1 & 0 & 0 & 0 & \cdots \\
1 / 2 & 1 / 2 & 0 & 0 & \cdots \\
1 / 3 & 1 / 3 & 1 / 3 & 0 & \cdots \\
1 / 4 & 1 / 4 & 1 / 4 & 1 / 4 & \cdots \\
\vdots & \vdots & \vdots & \vdots & \vdots
\end{array}\right) .
$$
Consider $C$ as the operator $C_p$ acting on $\ell_p (\mathbb{N}) $, where $1<p<\infty$. In the reference \cite{roa}, Rhoades and Neumann proved that $\mathfrak{S}\left(C_p\right)$ is the closed disc $\Gamma_q$,  where $1 / p+1 / q=1$,  and
$$
\Gamma_q:=\{\leftthreetimes \in \mathbb{C}:|\leftthreetimes-q / 2| \leq q / 2\}.
$$
In the reference \cite{mg}, Gonz\'alez proved that for each $\mu \in \operatorname{int} \Gamma_q$, it turns out that  $\mu I-C_p$ is an injective Fredholm operator and hence, a bounded below operator. But, by \cite[Theorem 1.12]{Aiena}, $\mathfrak{S}_{a}(C_p )$  contain the
boundary of $\mathfrak{S}\left(C_p\right)$, hence  
$\mathfrak{S}_{a}(C_p ) = \partial(\Gamma_q)$.
Consequently, the operator $C_p$ verifies property  $(E_A)$, whereby 

$$\partial(\Gamma_q) = \mathfrak{S}_{uf}(C_p ) = \mathfrak{S}_{uw}(C_p).$$
\end{Example}
 
Next, we will see a characterization that will allow us to establish, among other things, that properties $(E_A)$ and $(gE_A)$ are equivalent.
    
 \begin{Th} \label{T1.6}
 $\mathbb{T} \in \mathbb{L }(\mathbb{X})$  verifies
     property $(E_A)$,  if and only if,   $\mathfrak{S}_{uf}(\mathbb{T}) = \mathfrak{S}_{uw}(\mathbb{T})$.
     \end{Th}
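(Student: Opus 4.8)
The plan is to reduce everything to the standard inclusion chain $\mathfrak{S}_{uf}(\mathbb{T}) \subseteq \mathfrak{S}_{uw}(\mathbb{T}) \subseteq \mathfrak{S}_a(\mathbb{T})$, already invoked in Example 1, and then argue purely set-theoretically. First I would observe that, since $\mathfrak{S}_{uf}(\mathbb{T}) \subseteq \mathfrak{S}_{uw}(\mathbb{T})$, taking complements relative to $\mathfrak{S}_a(\mathbb{T})$ reverses the inclusion, so that $\Delta_{uf}(\mathbb{T}) = \mathfrak{S}_a(\mathbb{T})\setminus \mathfrak{S}_{uw}(\mathbb{T}) \subseteq \mathfrak{S}_a(\mathbb{T})\setminus \mathfrak{S}_{uf}(\mathbb{T}) = \delta_{uf}(\mathbb{T})$ holds unconditionally. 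Hence property $(E_A)$, i.e.\ $\delta_{uf}(\mathbb{T}) = \Delta_{uf}(\mathbb{T})$, is equivalent to the single inclusion $\delta_{uf}(\mathbb{T}) \subseteq \Delta_{uf}(\mathbb{T})$, and this is the only thing I actually need to track.

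The implication $(\Leftarrow)$ is immediate: if $\mathfrak{S}_{uf}(\mathbb{T}) = \mathfrak{S}_{uw}(\mathbb{T})$, then $\delta_{uf}(\mathbb{T})$ and $\Delta_{uf}(\mathbb{T})$ are literally the same set difference, so $(E_A)$ holds. For $(\Rightarrow)$, I would assume $(E_A)$ and prove $\mathfrak{S}_{uw}(\mathbb{T}) \subseteq \mathfrak{S}_{uf}(\mathbb{T})$, the reverse inclusion being automatic. Equivalently, I would take $\leftthreetimes \notin \mathfrak{S}_{uf}(\mathbb{T})$ and show $\leftthreetimes \notin \mathfrak{S}_{uw}(\mathbb{T})$, splitting into two cases according to whether $\leftthreetimes$ lies in $\mathfrak{S}_a(\mathbb{T})$. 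If $\leftthreetimes \notin \mathfrak{S}_a(\mathbb{T})$, then $\leftthreetimes \notin \mathfrak{S}_{uw}(\mathbb{T})$ follows at once from $\mathfrak{S}_{uw}(\mathbb{T}) \subseteq \mathfrak{S}_a(\mathbb{T})$. If instead $\leftthreetimes \in \mathfrak{S}_a(\mathbb{T})$, then $\leftthreetimes \in \mathfrak{S}_a(\mathbb{T})\setminus \mathfrak{S}_{uf}(\mathbb{T}) = \delta_{uf}(\mathbb{T})$, and the hypothesis $(E_A)$ gives $\leftthreetimes \in \Delta_{uf}(\mathbb{T}) = \mathfrak{S}_a(\mathbb{T})\setminus \mathfrak{S}_{uw}(\mathbb{T})$, so again $\leftthreetimes \notin \mathfrak{S}_{uw}(\mathbb{T})$.

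There is no serious obstacle here; the argument is essentially a chase with set differences. The one point that genuinely must be invoked is that both $\mathfrak{S}_{uf}(\mathbb{T})$ and $\mathfrak{S}_{uw}(\mathbb{T})$ sit inside $\mathfrak{S}_a(\mathbb{T})$: this is what lets differences taken relative to $\mathfrak{S}_a(\mathbb{T})$ detect the full sets $\mathfrak{S}_{uf}(\mathbb{T})$ and $\mathfrak{S}_{uw}(\mathbb{T})$, and in particular it is the inclusion $\mathfrak{S}_{uw}(\mathbb{T}) \subseteq \mathfrak{S}_a(\mathbb{T})$ that disposes of the case $\leftthreetimes \notin \mathfrak{S}_a(\mathbb{T})$. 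Accordingly, I would state this inclusion chain explicitly at the outset and then run the case split, keeping the whole proof at the level of elementary complement manipulations.
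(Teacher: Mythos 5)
Your argument is correct and follows essentially the same route as the paper's proof: the backward implication is immediate, and the forward implication is handled by taking $\leftthreetimes \notin \mathfrak{S}_{uf}(\mathbb{T})$ and splitting into the cases $\leftthreetimes \in \mathfrak{S}_a(\mathbb{T})$ (where property $(E_A)$ applies) and $\leftthreetimes \notin \mathfrak{S}_a(\mathbb{T})$ (where $\mathfrak{S}_{uw}(\mathbb{T}) \subseteq \mathfrak{S}_a(\mathbb{T})$ finishes). Your preliminary remark that the inclusion $\Delta_{uf}(\mathbb{T}) \subseteq \delta_{uf}(\mathbb{T})$ holds unconditionally is a small clarifying addition but does not change the substance of the argument.
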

    \begin{proof}
        It always holds that \(\mathfrak{S}_{uf}(\T) \subseteq \mathfrak{S}_{uw}(\T)\). Now, if we suppose that \(\leftthreetimes \notin \ \mathfrak{S}_{uf}(\T)\), then  \(\leftthreetimes I - \T \) is an upper semi-Fredholm operator. There are two possibilities: one is that if \(\leftthreetimes \in \mathfrak{S}_a (\T)\), then \(\leftthreetimes \notin \mathfrak{S}_{uw}(\T)\), since \(\T\) satisfies the property \((E_A)\); other is that if \(\leftthreetimes \notin \mathfrak{S}_a (\T)\), it holds that \(\leftthreetimes \notin \mathfrak{S}_{uw}(\T)\), since \(\mathfrak{S}_{uw}(\T) \subseteq \mathfrak{S}_a(\T)\), so in any case,   \(\leftthreetimes \notin \mathfrak{S}_{uw}(\T)\). Hence,  we conclude that \( \mathfrak{S}_{uf}(\T) = \mathfrak{S}_{uw}(\T)\). The converse implication is straightforward.
    \end{proof}

 Next, we present an example of an operator that verifies property $(E_A)$, but its dual operator does not verify it.
\begin{Example}\label{e3.5}
Consider \(\mathbb{X} = \ell^2(\mathbb{N})\) and let $\T$ be the {unilateral left shift} defined as: $$\T(x_1,x_2,x_3,...)=(x_2,x_3,...), \text{  for all } (x_n)\in \ell^2(\mathbb{N}).$$
To this operator is known that the upper semi-Weyl spectrum is the unitary disc $\mathbf{D}(0,1)$. On the other hand, its upper semi-Fredholm spectrum is given by the unitary circle $\Gamma(0,1)$, see \cite[Example 2.3]{Ouidren222}. Therefore, the unilateral left shift does not verify property $(E_A)$. Now, consider its dual operator on $\ell^2(\mathbb{N})$ defined as:  $$\T^{*}(x_1,x_2,x_3,...)=(0,x_1,x_2,x_3,...), \text{  for all } (x_n)\in \ell^2(\mathbb{N}).$$ 
This operator is called the unilateral right shift operator. Note that $\T^{*}$ possesses the $SVEP$, and hence, $\T^{*}$ verifies property $(E_A)$. This ultimately allows establish that $\mathfrak{S}_{uf}(\T^{*}) = \mathfrak{S}_{uw}(\T^{*})= \Gamma(0,1)$.
\end{Example}

Given this last example, we see that property $(E_A)$ must be studied separately, both for an operator $\mathbb{T} \in \mathbb{L }(\mathbb{X})$, and for its dual $\mathbb{T^*}$. 

\begin{Cor}
Let $\mathbb{T} \in \mathbb{L }(\mathbb{X})$. Then, $\mathbb{T^*}$ verifies
     property $(E_A)$,  if and only if,   $\mathfrak{S}_{lf}(\mathbb{T}) = \mathfrak{S}_{lw}(\mathbb{T})$.
\end{Cor}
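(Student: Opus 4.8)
The plan is to reduce this corollary directly to Theorem \ref{T1.6} applied to the dual operator $\mathbb{T}^{*}$, and then to translate the equality of \emph{upper} semi-Fredholm and \emph{upper} semi-Weyl spectra of $\mathbb{T}^{*}$ into the equality of the corresponding \emph{lower}-type spectra of $\mathbb{T}$, using the classical duality theory for semi-Fredholm operators.

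First I would apply Theorem \ref{T1.6} with $\mathbb{T}^{*}$ in place of $\mathbb{T}$, which states that $\mathbb{T}^{*}$ verifies property $(E_A)$ if and only if $\mathfrak{S}_{uf}(\mathbb{T}^{*}) = \mathfrak{S}_{uw}(\mathbb{T}^{*})$. Thus the corollary will follow once I prove the two spectral identities $\mathfrak{S}_{uf}(\mathbb{T}^{*}) = \mathfrak{S}_{lf}(\mathbb{T})$ and $\mathfrak{S}_{uw}(\mathbb{T}^{*}) = \mathfrak{S}_{lw}(\mathbb{T})$.

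To establish these, I would fix $\leftthreetimes \in \mathbb{C}$ and use that $(\leftthreetimes \mathbb{I} - \mathbb{T})^{*} = \leftthreetimes \mathbb{I} - \mathbb{T}^{*}$, together with the standard facts (see \cite{Aiena}) that a bounded operator $S$ is upper semi-Fredholm if and only if $S^{*}$ is lower semi-Fredholm, that $S$ is lower semi-Fredholm if and only if $S^{*}$ is upper semi-Fredholm, and that in the semi-Fredholm case $\alpha(S^{*}) = \beta(S)$ and $\beta(S^{*}) = \alpha(S)$, so that $\text{ind}(S^{*}) = -\text{ind}(S)$. Applying the first equivalence to $S = \leftthreetimes \mathbb{I} - \mathbb{T}$ gives that $\leftthreetimes \mathbb{I} - \mathbb{T}^{*}$ is upper semi-Fredholm exactly when $\leftthreetimes \mathbb{I} - \mathbb{T}$ is lower semi-Fredholm, which is the identity $\mathfrak{S}_{uf}(\mathbb{T}^{*}) = \mathfrak{S}_{lf}(\mathbb{T})$. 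For the Weyl version, $\leftthreetimes \mathbb{I} - \mathbb{T}^{*}$ is upper semi-Weyl precisely when it is upper semi-Fredholm with $\text{ind} \leq 0$; by the index reversal this is equivalent to $\leftthreetimes \mathbb{I} - \mathbb{T}$ being lower semi-Fredholm with $\text{ind} \geq 0$, i.e. lower semi-Weyl, giving $\mathfrak{S}_{uw}(\mathbb{T}^{*}) = \mathfrak{S}_{lw}(\mathbb{T})$. Substituting both identities into the equivalence from Theorem \ref{T1.6} yields the stated characterization.

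I expect the only delicate point to be the index bookkeeping in the Weyl step: the sign reversal $\text{ind}(S^{*}) = -\text{ind}(S)$ is exactly what converts the constraint $\text{ind} \leq 0$ defining the upper semi-Weyl condition for $\mathbb{T}^{*}$ into the constraint $\text{ind} \geq 0$ defining the lower semi-Weyl condition for $\mathbb{T}$, and one must be sure the duality formulas $\alpha(S^{*}) = \beta(S)$ and $\beta(S^{*}) = \alpha(S)$ are being invoked only where $S$ is semi-Fredholm, so that the indices are well defined. Everything else is a routine transcription of duality, so there is no substantial obstacle beyond this sign tracking.
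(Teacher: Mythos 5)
Your proposal is correct and follows exactly the route the paper intends: the paper's proof is the one-line ``It follows by duality and by Theorem \ref{T1.6}'', and you have simply filled in the standard duality identities $\mathfrak{S}_{uf}(\mathbb{T}^{*})=\mathfrak{S}_{lf}(\mathbb{T})$ and $\mathfrak{S}_{uw}(\mathbb{T}^{*})=\mathfrak{S}_{lw}(\mathbb{T})$ (via $\mathrm{ind}(S^{*})=-\mathrm{ind}(S)$) that make that one-liner work. No issues.
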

   \begin{proof}
   It follows by duality and by Theorem \ref{T1.6}.
   \end{proof}

  We require the following lemma to establish equivalence between the properties $(E_A)$ and $(gE_A)$.

\begin{La}\label{T1.44} Let $\mathbb{T }\in \mathbb{L }(\mathbb{X})$. Then,  
$\mathfrak{S}_{uf} 
 (\mathbb{T})=\mathfrak{S}_{uw}(\mathbb{T}) $, if and only if, $ \mathfrak{S}_{ubf}(\mathbb{T})=\mathfrak{S}_{ubw}(\mathbb{T}).$
\end{La}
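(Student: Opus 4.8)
The plan is to prove both implications by reducing the $B$-Fredholm/$B$-Weyl statement to the ordinary semi-Fredholm/semi-Weyl statement via the standard theory of the quasi-nilpotent part and the analytic core, but in fact the cleanest route is to exploit the well-known identity relating these spectra. Recall the general inclusions $\mathfrak{S}_{ubf}(\mathbb{T}) \subseteq \mathfrak{S}_{ubw}(\mathbb{T}) \subseteq \mathfrak{S}_{uw}(\mathbb{T})$ and $\mathfrak{S}_{ubf}(\mathbb{T}) \subseteq \mathfrak{S}_{uf}(\mathbb{T}) \subseteq \mathfrak{S}_{uw}(\mathbb{T})$, so the two equalities in the statement are squeezed between the same outer sets. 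The key structural fact I would invoke is the punctured-neighborhood characterization: a point $\leftthreetimes_0$ lies in $\mathfrak{S}_{uf}(\mathbb{T})\setminus \mathfrak{S}_{ubf}(\mathbb{T})$ exactly when $\leftthreetimes_0 I - \mathbb{T}$ is upper semi $B$-Fredholm but not upper semi-Fredholm, and that by the standard theory (Aiena's book, the results on semi $B$-Fredholm operators) this happens only at \emph{isolated} points of $\mathfrak{S}_{uf}(\mathbb{T})$, with an analogous statement for the Weyl versions.

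First I would prove the forward direction. Assume $\mathfrak{S}_{uf}(\mathbb{T}) = \mathfrak{S}_{uw}(\mathbb{T})$. I always have $\mathfrak{S}_{ubf}(\mathbb{T}) \subseteq \mathfrak{S}_{ubw}(\mathbb{T})$, so I only need the reverse inclusion; equivalently I must show that if $\leftthreetimes_0 I - \mathbb{T}$ is upper semi $B$-Fredholm then it is upper semi $B$-Weyl. By the definition in item (v)–(vi) of the preliminaries, upper semi $B$-Fredholmness means some restriction $(\leftthreetimes_0 I-\mathbb{T})_{[n]}$ is upper semi-Fredholm; the punctured-neighborhood theorem then guarantees that $\mu I - \mathbb{T}$ is upper semi-Fredholm for all $\mu$ in a deleted neighborhood of $\leftthreetimes_0$, with constant index there. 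By the hypothesis $\mathfrak{S}_{uf}=\mathfrak{S}_{uw}$, each such $\mu I - \mathbb{T}$ is in fact upper semi-Weyl, i.e. has index $\le 0$; since the index is constant on the punctured disc, that common value is $\le 0$, and by the definition of the $B$-Fredholm index ($\mathrm{ind}\,\mathbb{T} := \mathrm{ind}\,\mathbb{T}_{[n]}$) this forces $\leftthreetimes_0 I - \mathbb{T}$ to be upper semi $B$-Weyl. Hence $\leftthreetimes_0 \notin \mathfrak{S}_{ubw}(\mathbb{T})$, giving the desired equality.

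For the converse I would argue contrapositively and again control everything through the index. Assume $\mathfrak{S}_{ubf}(\mathbb{T}) = \mathfrak{S}_{ubw}(\mathbb{T})$ and suppose toward a contradiction that $\mathfrak{S}_{uf}(\mathbb{T}) \ne \mathfrak{S}_{uw}(\mathbb{T})$; then there is $\leftthreetimes_0$ with $\leftthreetimes_0 I-\mathbb{T}$ upper semi-Fredholm but \emph{not} upper semi-Weyl, so $0 < \mathrm{ind}(\leftthreetimes_0 I - \mathbb{T})$. Every upper semi-Fredholm operator is upper semi $B$-Fredholm (take $n=0$), so $\leftthreetimes_0 \notin \mathfrak{S}_{ubf}(\mathbb{T}) = \mathfrak{S}_{ubw}(\mathbb{T})$, meaning $\leftthreetimes_0 I - \mathbb{T}$ is upper semi $B$-Weyl. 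But for an operator that is simultaneously upper semi-Fredholm (so the relevant restriction is already at level $n=0$, up to the usual coincidence of the two index notions) the upper semi $B$-Weyl condition $\mathrm{ind} \le 0$ coincides with the ordinary upper semi-Weyl condition, contradicting $\mathrm{ind}(\leftthreetimes_0 I-\mathbb{T}) > 0$. This yields $\mathfrak{S}_{uf}(\mathbb{T}) = \mathfrak{S}_{uw}(\mathbb{T})$.

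The main obstacle I anticipate is the bookkeeping of the index across the two definitions: one must be careful that the $B$-Fredholm index $\mathrm{ind}\,\mathbb{T}_{[n]}$ agrees with $\mathrm{ind}\,\mathbb{T}$ when $\mathbb{T}$ is already (upper) semi-Fredholm, and that the punctured-neighborhood/constancy-of-index results are applied to the correct restriction. In other words, the delicate point is justifying that the sign of the index is preserved when passing between $(\leftthreetimes_0 I - \mathbb{T})$ and its restriction $(\leftthreetimes_0 I - \mathbb{T})_{[n]}$, which is exactly where I would lean on the cited background (the stability and index results for semi $B$-Fredholm operators in Aiena's book). Once that identification is in hand, both inclusions are immediate from the definitions.
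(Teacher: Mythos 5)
Your proposal is correct and follows essentially the same route as the paper: both directions hinge on the punctured-neighborhood theorem for upper semi $B$-Fredholm operators and the well-definedness of the index of $(\leftthreetimes_0 I-\mathbb{T})_{[n]}$, exactly as in the paper's appeal to \cite[Theorem 1.117]{Aiena}. Your converse is phrased slightly more directly (pure index bookkeeping at $n=0$ instead of passing through the punctured disc again), but this is a cosmetic variation rather than a different argument.
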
 

 \begin{proof}
  Suppose that \(\mathfrak{S}_{ubf}(\T ) = \mathfrak{S}_{ubw}(\T )\) and let \(\leftthreetimes_0 \notin \mathfrak{S}_{uf}(\T )\). Then \(\leftthreetimes_0 I - T\) is an upper semi-Fredholm operator and consequently, it is upper semi-B-Fredholm. By \cite[Theorem 1.117]{Aiena}  there exists an open disc \(\mathbb{D} \) centered at \(\leftthreetimes_0\) such that for all $\leftthreetimes \in \mathbb{D} \setminus\{\leftthreetimes_0\}$, the operator \(\leftthreetimes I - \T\) is  an upper semi-Fredholm with  \(\text{ind}(\leftthreetimes I - \T) = \text{ind}(\leftthreetimes_0 I - \T)\). Since \(\leftthreetimes_0 \notin \mathfrak{S}_{ubw}(\T )\), it follows that \(\leftthreetimes \notin \mathfrak{S}_{uw}(\T)\) and  therefore, \(\mathfrak{S}_{uw}(\T ) \subseteq\mathfrak{S}_{uf}(\T )\). 

  On the other hand, if \(\leftthreetimes_0 \notin \mathfrak{S}_{ubf}(\T )\), then \(\leftthreetimes_0I - \T\) is an upper semi-B-Fredholm  operator, thus, by \cite[Theorem 1.117]{Aiena}, there exists at least one open disc \(\mathbb{D}\) centered at \(\leftthreetimes_0\) such that for all \(\leftthreetimes \neq \leftthreetimes_0\) on \(\mathbb{D}\),  \(\leftthreetimes I - \T\) is upper semi-Fredholm and \(\text{ind}(\leftthreetimes I - \T ) = \text{ind} (\leftthreetimes_0 I - \T)\), so that \(\leftthreetimes I -\T\)  is an upper semi-Weyl operator. Hence \(\text{ind}(\leftthreetimes_0 I - \T) = \text{ind}(\leftthreetimes I - \T) \leq 0\) and thus \(\leftthreetimes_0 I - \T\) is an upper semi-Weyl operator, which implies that  \(\leftthreetimes_0\notin \mathfrak{S}_{ubw}(\T )\). Therefore, \(\mathfrak{S}_{ubw}(\T ) \subset \mathfrak{S}_{ubf}(\T )\).   
 \end{proof}

The following equivalence theorem will allow us to obtain the calculation of some other spectra.

\begin{Th} \label{T1.7}
 Let $\mathbb{T} \in \mathbb{L }(\mathbb{X})$ verifies
     property $(E_A)$,  if and only if,  $\mathbb{T}$  verifies
     property $(gE_A)$.
     \end{Th}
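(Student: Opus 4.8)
The plan is to chain together the two characterizations already available and then supply the one missing link. By Theorem \ref{T1.6}, property $(E_A)$ is equivalent to $\mathfrak{S}_{uf}(\mathbb{T}) = \mathfrak{S}_{uw}(\mathbb{T})$, and by Lemma \ref{T1.44} this equality is in turn equivalent to $\mathfrak{S}_{ubf}(\mathbb{T}) = \mathfrak{S}_{ubw}(\mathbb{T})$. Thus it suffices to prove that property $(gE_A)$ is equivalent to the equality $\mathfrak{S}_{ubf}(\mathbb{T}) = \mathfrak{S}_{ubw}(\mathbb{T})$; once this is done, the desired equivalence $(E_A)\Leftrightarrow (gE_A)$ follows by transitivity.

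To establish this missing characterization I would mimic the proof of Theorem \ref{T1.6}, replacing the upper semi-Fredholm/upper semi-Weyl pair by its $B$-analogue. The relevant inclusions are $\mathfrak{S}_{ubf}(\mathbb{T}) \subseteq \mathfrak{S}_{ubw}(\mathbb{T})$ (every upper semi $B$-Weyl operator is upper semi $B$-Fredholm) and $\mathfrak{S}_{ubw}(\mathbb{T}) \subseteq \mathfrak{S}_a(\mathbb{T})$, the latter coming from the chain $\mathfrak{S}_{ubw}(\mathbb{T}) \subseteq \mathfrak{S}_{uw}(\mathbb{T}) \subseteq \mathfrak{S}_a(\mathbb{T})$. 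For the nontrivial direction, assume $(gE_A)$, that is $\delta_{ubf}(\mathbb{T}) = \Delta_{ubf}(\mathbb{T})$, and take $\leftthreetimes \notin \mathfrak{S}_{ubf}(\mathbb{T})$. If $\leftthreetimes \in \mathfrak{S}_a(\mathbb{T})$, then $\leftthreetimes \in \delta_{ubf}(\mathbb{T}) = \Delta_{ubf}(\mathbb{T})$, so $\leftthreetimes \notin \mathfrak{S}_{ubw}(\mathbb{T})$; if instead $\leftthreetimes \notin \mathfrak{S}_a(\mathbb{T})$, then $\leftthreetimes \notin \mathfrak{S}_{ubw}(\mathbb{T})$ by the inclusion $\mathfrak{S}_{ubw}(\mathbb{T}) \subseteq \mathfrak{S}_a(\mathbb{T})$. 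In either case $\leftthreetimes \notin \mathfrak{S}_{ubw}(\mathbb{T})$, which yields $\mathfrak{S}_{ubw}(\mathbb{T}) \subseteq \mathfrak{S}_{ubf}(\mathbb{T})$ and hence equality. The converse is immediate: if $\mathfrak{S}_{ubf}(\mathbb{T}) = \mathfrak{S}_{ubw}(\mathbb{T})$, then subtracting each from $\mathfrak{S}_a(\mathbb{T})$ gives $\delta_{ubf}(\mathbb{T}) = \Delta_{ubf}(\mathbb{T})$, i.e. $(gE_A)$.

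The argument is essentially routine once Theorem \ref{T1.6} and Lemma \ref{T1.44} are in hand; the only point that requires care is the verification of the inclusion $\mathfrak{S}_{ubw}(\mathbb{T}) \subseteq \mathfrak{S}_a(\mathbb{T})$, since this is precisely what allows the case $\leftthreetimes \notin \mathfrak{S}_a(\mathbb{T})$ to be dispatched and the two set-difference definitions to be recast as an equality of spectra. The genuine difficulty of the whole chain was already absorbed into Lemma \ref{T1.44}, whose punctured-disc index argument via \cite[Theorem 1.117]{Aiena} does the real work of transferring the equality $\mathfrak{S}_{uf}(\mathbb{T}) = \mathfrak{S}_{uw}(\mathbb{T})$ to the $B$-level; here I merely have to translate between the spectral-equality formulation and the definitions of $(E_A)$ and $(gE_A)$.
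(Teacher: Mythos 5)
Your proposal is correct and follows essentially the same route as the paper: both reduce the equivalence to the chain $(E_A)\Leftrightarrow \mathfrak{S}_{uf}(\mathbb{T})=\mathfrak{S}_{uw}(\mathbb{T})\Leftrightarrow \mathfrak{S}_{ubf}(\mathbb{T})=\mathfrak{S}_{ubw}(\mathbb{T})\Leftrightarrow (gE_A)$ via Theorem \ref{T1.6} and Lemma \ref{T1.44}. The only difference is that you spell out the last link (the $B$-analogue of Theorem \ref{T1.6}, using $\mathfrak{S}_{ubw}(\mathbb{T})\subseteq\mathfrak{S}_{uw}(\mathbb{T})\subseteq\mathfrak{S}_a(\mathbb{T})$), which the paper compresses into the phrase ``by a similar reasoning as in Theorem \ref{T1.6}.''
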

  \begin{proof}
We assume that $\T$ verifies property $(E_A)$. By Theorem \ref{T1.6}, we have \esuf  $=$ \esuw  and by Lemma \ref{T1.44}, it follows that \esubf $=$ \esubw, which implies that $\mathbb{T}$ verifies property $(gE_A)$.
    
  If $\mathbb{T}$ has the property $(gE_A)$, then by a similar reasoning as in Theorem \ref{T1.6}, we obtain that property $(gE_A)$ implies that \esubw $=$ \esubf, then again, by Lemma \ref{T1.6}, it follows that  \esuf = \esuw. Therefore, $\mathbb{T}$ verifies property $(E_A)$.
\end{proof}

\begin{Example}\label{exam2}
       Let $\mathbb{S}:  \ell^2(\mathbb{N})  \longrightarrow \ell^2(\mathbb{N})$  be an injective quasinilpotent operator which is not nilpotent. Define the operator \(\T\) on the Banach space \(\ell^2(\mathbb{N}) \oplus \ell^2(\mathbb{N}) \) by \(\T = I \oplus \mathbb{S}\), where \(I\) is the identity operator. We have \(\mathfrak{S}_{a}(\T) = \{0,1\}\), \(\mathfrak{S}_{ubf}(\T) = \{0\}\) and \(\mathfrak{S}_{uf}(\T) = \{0,1\}\). Since \(\mathfrak{S}_{a}(\T) \subseteq \mathfrak{S}_{uw}(\T) \subseteq \mathfrak{S}_{uf}(\T)\), it turns out that \(\mathfrak{S}_{uf}(\T) = \mathfrak{S}_{uw}(\T)\), which implies that  \(\T\)  verifies property $(E_A)$. Also, by Lemma \ref{T1.44}, we have \(\mathfrak{S}_{ubw}(\T)  = \{0\}\).
\end{Example}

\begin{Example}
Let \(\T\) be the unilateral right shift on  $\ell^2(\mathbb{N})$. It is a well-known fact that \(\mathfrak{S}_{uf}(\T) =\mathfrak{S}_{ubw}(\T) = \mathfrak{S}_{uw}(\T) = \Gamma(0,1)\), so \(\T\) verifies properties \((E_A)\). Therefore, by Lemma \ref{T1.44}, \(\mathfrak{S}_{ubf}(\T) =  \Gamma(0,1)\).
 \end{Example}

\section{Sufficient conditions that implies property $(E_A)$.}

In reference \cite{EJE}, property $(bz)$ was studied in depth; in that study, it is seen that an operator $\mathbb{T}$ verifies property $(bz)$, if and only if, $\mathfrak{S}_{uf}(\mathbb{T})=\mathfrak{S}_{ub}(\mathbb{T})$, so property $(bz)$ implies property $(E_A)$, because \(\mathfrak{S}_{uf}(\T ) \subseteq \mathfrak{S}_{uw}(\T ) \subseteq \mathfrak{S}_{ub}(\T )\). A topological condition equivalent to property $(bz)$ is that, $int(\delta_{uf}(\mathbb{T}))= \emptyset$. We propose a new topological condition but in view of the connection.

\begin{Th}
Let be $\mathbb{T} \in \mathbb{L }(\mathbb{X})$. If $\rho_{uf}(\mathbb{T})$ is connected, then  $\mathbb{T}$  verifies property $(bz)$. 
\end{Th}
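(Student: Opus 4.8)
The plan is to recast property $(bz)$ as an SVEP statement on the upper semi-Fredholm region and then use the connectedness hypothesis to transport injectivity from the resolvent set into that region. Recall from the paragraph preceding the theorem that $\mathbb{T}$ verifies $(bz)$ exactly when $\mathfrak{S}_{uf}(\mathbb{T})=\mathfrak{S}_{ub}(\mathbb{T})$, equivalently when $\operatorname{int}(\delta_{uf}(\mathbb{T}))=\emptyset$. Since $\mathfrak{S}_{uf}(\mathbb{T})\subseteq\mathfrak{S}_{ub}(\mathbb{T})$ always holds, it is enough to prove that every $\lambda$ for which $\lambda I-\mathbb{T}$ is upper semi-Fredholm is in fact upper semi-Browder, i.e. has finite ascent. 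By Remark \ref{Re1} and \cite[Theorem 3.8]{Aiena}, for such a $\lambda$ finite ascent is equivalent to $\mathbb{T}$ having SVEP at $\lambda$. Hence the goal reduces to showing that \emph{$\mathbb{T}$ has SVEP at every point of the open set $U:=\mathbb{C}\setminus\mathfrak{S}_{uf}(\mathbb{T})$.}

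First I would exploit the local structure of $U$. By the punctured neighbourhood theorem \cite[Theorem 1.117]{Aiena}, $\operatorname{ind}(\lambda I-\mathbb{T})$ is constant on each connected component $\Omega$ of $U$, and $\alpha(\lambda I-\mathbb{T})$ equals a constant minimal value $\alpha_\Omega$ off a discrete subset of $\Omega$. From the analytic selection of kernel vectors available when $\alpha$ is constant and positive, one obtains the dichotomy that $\mathbb{T}$ has SVEP at every point of $\Omega$ if and only if $\alpha_\Omega=0$, that is, if and only if $\Omega$ contains a point where $\lambda I-\mathbb{T}$ is bounded below. Since $\rho(\mathbb{T})\subseteq U$ and resolvent points are bounded below, every component of $U$ meeting $\rho(\mathbb{T})$ is harmless; in particular this disposes of the unbounded component of $U$, which contains a neighbourhood of $\infty$.

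It remains to control the components of $U$ lying inside $\mathfrak{S}(\mathbb{T})$, that is, those meeting $\rho_{uf}(\mathbb{T})=U\cap\mathfrak{S}(\mathbb{T})$. Here the hypothesis enters: because $\rho_{uf}(\mathbb{T})$ is connected it is contained in a single component $\Omega^\ast$ of $U$, and every other component avoids $\rho_{uf}(\mathbb{T})$, hence sits in $\rho(\mathbb{T})$ and carries SVEP by the previous step. The proof therefore collapses to showing $\alpha_{\Omega^\ast}=0$, i.e. that $\Omega^\ast$ contains a point at which $\lambda I-\mathbb{T}$ is bounded below; it would suffice that $\Omega^\ast$ meet $\rho(\mathbb{T})$. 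This is precisely the step I expect to be the main obstacle: a component of $U$ could a priori be a bounded hole buried in $\mathfrak{S}(\mathbb{T})$ on which $\alpha$ is constantly positive — exactly what happens for the unilateral left shift of Example \ref{e3.5}, whose upper semi-Fredholm region has the open unit disc as such a component — so the argument must genuinely forbid $\Omega^\ast$ from being an isolated hole. To close it I would use connectedness to show that $\overline{\rho_{uf}(\mathbb{T})}$ meets $\partial\mathfrak{S}(\mathbb{T})\subseteq\mathfrak{S}_a(\mathbb{T})$, so that $\Omega^\ast$ cannot be separated from $\rho(\mathbb{T})$, forcing $\alpha_{\Omega^\ast}=0$; concretely this amounts to ensuring that the whole region $\mathbb{C}\setminus\mathfrak{S}_{uf}(\mathbb{T})$ reduces to a single unbounded component reaching $\rho(\mathbb{T})$. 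Granting this linking step, SVEP holds throughout $U$, whence $\mathfrak{S}_{uf}(\mathbb{T})=\mathfrak{S}_{ub}(\mathbb{T})$ and $\mathbb{T}$ verifies property $(bz)$.
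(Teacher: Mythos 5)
Your strategy is essentially the paper's: reduce $(bz)$ to showing that $\mathbb{T}$ has SVEP at every point of the upper semi-Fredholm region, seed the SVEP from $\rho(\mathbb{T})$, and propagate it along connected components (your $\alpha_{\Omega}=0$ dichotomy is exactly the content of \cite[Theorem 3.36]{Aiena2}, which the paper invokes). But the gap you flag at the end is real and cannot be closed in the way you suggest. If $\rho_{uf}(\mathbb{T})$ is read literally as $\mathfrak{S}(\mathbb{T})\setminus\mathfrak{S}_{uf}(\mathbb{T})$, as defined in Section 3, the statement is in fact false: for the unilateral left shift $\mathbb{L}$ of Example \ref{e3.5} one has $\mathfrak{S}(\mathbb{L})=D(0,1)$ and $\mathfrak{S}_{uf}(\mathbb{L})=\Gamma(0,1)$, so $\mathfrak{S}(\mathbb{L})\setminus\mathfrak{S}_{uf}(\mathbb{L})$ is the open unit disc, which is connected, yet $\mathbb{L}$ fails $(bz)$ because $\mathfrak{S}_{ub}(\mathbb{L})=D(0,1)\neq\Gamma(0,1)$ (indeed $\mathbb{L}$ does not even satisfy $(E_A)$). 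This very example defeats your proposed linking step: $\overline{\rho_{uf}(\mathbb{L})}$ does meet $\partial\mathfrak{S}(\mathbb{L})$, but the component $\Omega^{\ast}$ (the open disc) is still separated from $\rho(\mathbb{L})$ by the circle $\mathfrak{S}_{uf}(\mathbb{L})$, and $\alpha_{\Omega^{\ast}}=1$. No topological argument can supply the missing step.

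The paper's proof avoids the problem because it tacitly uses the other reading, $\rho_{uf}(\mathbb{T})=\mathbb{C}\setminus\mathfrak{S}_{uf}(\mathbb{T})$: its opening assertion $\rho(\mathbb{T})\subseteq\rho_{uf}(\mathbb{T})$ is only true for the complement in $\mathbb{C}$, not in $\mathfrak{S}(\mathbb{T})$. Under that reading the hypothesis says that the whole set $U=\mathbb{C}\setminus\mathfrak{S}_{uf}(\mathbb{T})$ is connected, so its unique component automatically contains the nonempty set $\rho(\mathbb{T})$, SVEP propagates to all of $U$, and the conclusion follows exactly as in your first two paragraphs --- the problematic bounded ``hole'' component cannot exist. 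So your argument becomes complete (and coincides with the paper's) once the hypothesis is taken to mean that $\mathbb{C}\setminus\mathfrak{S}_{uf}(\mathbb{T})$ is connected; with the hypothesis as literally written, it is the theorem, not your proof, that needs correcting.
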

  \begin{proof}
 It is clear that $\rho(\mathbb{T}) \subseteq\rho_{uf}(\mathbb{T})$, also, $\mathbb{T}$ verifies the SVEP for all  $\leftthreetimes \in \rho(\mathbb{T})$, hence, there is a  $\leftthreetimes \in \Omega$ for which $\mathbb{T}$ has the SVEP, where  
 $ \Omega$ is the unique component of  $\rho_{uf}(\mathbb{T})$. Hence, by \cite[Theorem 3.36]{Aiena2} it turns out that $\mathbb{T}$ has the SVEP for all $\leftthreetimes \in \Omega$. Now, let $\leftthreetimes \notin \mathfrak{S}_{uf}(\mathbb{T})$, so $\leftthreetimes \in \rho_{uf}(\mathbb{T})$, and so $\leftthreetimes \in  \Omega$, whereby $\mathbb{T}$ has the SVEP at $\leftthreetimes$, then  $p(\leftthreetimes I-\mathbb{T})<+\infty$, hence $\leftthreetimes \notin \mathfrak{S}_{ub}(\mathbb{T})$. Thus, we deduce that 
  $\mathbb{T}$  verifies property $(bz)$.      
  \end{proof}

\begin{Cor}
Let be $\mathbb{T} \in \mathbb{L }(\mathbb{X})$. If $\rho_{uf}(\mathbb{T})$ is connected, then  $\mathbb{T}$  verifies property $(E_A)$. 
\end{Cor}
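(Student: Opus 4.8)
The plan is to derive this corollary directly from the preceding theorem together with the implication chain already established in this section. The theorem asserts that if $\rho_{uf}(\mathbb{T})$ is connected, then $\mathbb{T}$ verifies property $(bz)$; and property $(bz)$ is characterized by the equality $\mathfrak{S}_{uf}(\mathbb{T})=\mathfrak{S}_{ub}(\mathbb{T})$. So the whole task reduces to recalling why property $(bz)$ forces property $(E_A)$. This was in fact spelled out at the opening of the section: one has the chain of inclusions $\mathfrak{S}_{uf}(\mathbb{T}) \subseteq \mathfrak{S}_{uw}(\mathbb{T}) \subseteq \mathfrak{S}_{ub}(\mathbb{T})$, so if the two outer sets coincide, the middle set is squeezed between them and all three are equal; in particular $\mathfrak{S}_{uf}(\mathbb{T})=\mathfrak{S}_{uw}(\mathbb{T})$.

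First I would invoke the theorem to conclude that $\mathbb{T}$ verifies property $(bz)$, which by its characterization means $\mathfrak{S}_{uf}(\mathbb{T})=\mathfrak{S}_{ub}(\mathbb{T})$. Next I would insert the squeeze argument using the standard inclusions $\mathfrak{S}_{uf}(\mathbb{T}) \subseteq \mathfrak{S}_{uw}(\mathbb{T}) \subseteq \mathfrak{S}_{ub}(\mathbb{T})$ (these hold because an upper semi-Fredholm operator with finite ascent is upper semi-Weyl, and upper semi-Browder is upper semi-Weyl), to obtain $\mathfrak{S}_{uf}(\mathbb{T})=\mathfrak{S}_{uw}(\mathbb{T})$. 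Finally I would apply Theorem \ref{T1.6}, which states precisely that this spectral equality is equivalent to property $(E_A)$, to conclude that $\mathbb{T}$ verifies property $(E_A)$.

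There is essentially no obstacle here: the corollary is a formal consequence of the theorem and two results already available in the text, so the proof is a short citation chain rather than a genuine argument. The only thing to be careful about is making the inclusion chain explicit so that the squeeze is transparent, and citing Theorem \ref{T1.6} (rather than re-deriving the $(E_A)$ characterization) to keep the argument clean.

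\begin{proof}
By the previous theorem, the hypothesis that $\rho_{uf}(\mathbb{T})$ is connected implies that $\mathbb{T}$ verifies property $(bz)$, that is, $\mathfrak{S}_{uf}(\mathbb{T})=\mathfrak{S}_{ub}(\mathbb{T})$. Since it always holds that $\mathfrak{S}_{uf}(\mathbb{T}) \subseteq \mathfrak{S}_{uw}(\mathbb{T}) \subseteq \mathfrak{S}_{ub}(\mathbb{T})$, the equality of the two outer sets forces $\mathfrak{S}_{uf}(\mathbb{T})=\mathfrak{S}_{uw}(\mathbb{T})$. Therefore, by Theorem \ref{T1.6}, $\mathbb{T}$ verifies property $(E_A)$.
\end{proof}
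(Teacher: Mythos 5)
Your proof is correct and follows exactly the route the paper intends: the paper states this corollary without proof precisely because it is the preceding theorem combined with the remark at the start of Section 4 that property $(bz)$ implies property $(E_A)$ via the inclusions $\mathfrak{S}_{uf}(\mathbb{T}) \subseteq \mathfrak{S}_{uw}(\mathbb{T}) \subseteq \mathfrak{S}_{ub}(\mathbb{T})$. Your explicit squeeze argument and the appeal to Theorem \ref{T1.6} are exactly the intended citation chain.
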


All characterizations and properties of the property  $(bz)$ apply to the property  $(E_A)$, since, we pointed out that property $(bz)$ implies property $(E_A)$, but the reverse is not true, as the following example indicates.

\begin{Example}  
Let \(\mathbb{L}\) be the Unilateral Left Shift on \(\ell^2(\N)\), and \(\mathbb{R}\) be the Unilateral Right Shift on \(\ell^2(\N)\). Define $\mathbb{T}:= \mathbb{L} \oplus \mathbb{R}$, which has $ \mathfrak{S}_{uf}(\T ) = \Gamma(0,1)$, $ \mathfrak{S}_{ub}(\T ) = D(0,1)$, see \cite{Ouidren222} for details. Note that, $\alpha(\mathbb{T})=1=\beta(\mathbb{T})$, whereby $0\notin \mathfrak{S}_{uw}(\mathbb{T})$. It is known that \(\mathbb{L}\) does no have SVEP at $0$, thus \(\mathbb{T}\) does no have SVEP at $0$, since is an upper semi-Fredholm operator, so $0\in \mathfrak{S}_{ub}(\mathbb{T})$. Hence, $\mathfrak{S}_{ub}(\mathbb{T}) \neq \mathfrak{S}_{uw}(\mathbb{T})$. This showed that $\mathbb{T}$ does not verify property $(bz)$. On the other hand, as $\mathbb{R^*} =\mathbb{L}$, so  $ind(\mathbb{L}) = - ind(\mathbb{R})$, also, $ind(\mathbb{T})= ind(\mathbb{L}) + ind(\mathbb{R})$ and $ \mathfrak{S}_{uf}(\T ) = \Gamma(0,1)$, then $ \mathfrak{S}_{uw}(\T ) =  \mathfrak{S}_{uf}(\T )$. Therefore, $\mathbb{T}$ verify property $(E_A)$.

\end{Example}

With the help of SVEP, we prove a sufficient condition for an operator $\mathbb{T} \in \mathbb{L }(\mathbb{X})$ to verify the property $(E_A)$.

\begin{Th}\label{T4.2}
Let be $\mathbb{T} \in \mathbb{L }(\mathbb{X})$. If  $Acc(\mathfrak{S}_{a}(\mathbb{T})) \subseteq   \mathfrak{S}_{uf}(\mathbb{T})$, then $\mathbb{T}$ verifies property $(E_A)$.
\end{Th}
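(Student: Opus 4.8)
The plan is to reduce the claim, via Theorem \ref{T1.6}, to the spectral identity $\mathfrak{S}_{uf}(\mathbb{T}) = \mathfrak{S}_{uw}(\mathbb{T})$. Since the inclusion $\mathfrak{S}_{uf}(\mathbb{T}) \subseteq \mathfrak{S}_{uw}(\mathbb{T})$ always holds, the entire content lies in the reverse inclusion, which I would establish in contrapositive form: if $\leftthreetimes \notin \mathfrak{S}_{uf}(\mathbb{T})$, then $\leftthreetimes \notin \mathfrak{S}_{uw}(\mathbb{T})$.

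First I would fix $\leftthreetimes \notin \mathfrak{S}_{uf}(\mathbb{T})$, so that $\leftthreetimes I - \mathbb{T}$ is an upper semi-Fredholm operator. Reading the hypothesis $Acc(\mathfrak{S}_a(\mathbb{T})) \subseteq \mathfrak{S}_{uf}(\mathbb{T})$ contrapositively yields $\leftthreetimes \notin Acc(\mathfrak{S}_a(\mathbb{T}))$, that is, the approximate point spectrum does not cluster at $\leftthreetimes$. By the observation recorded just before Remark \ref{Re1}, this forces $\mathbb{T}$ to possess the SVEP at $\leftthreetimes$.

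Next I would combine the SVEP at $\leftthreetimes$ with the semi-Fredholm property. Because $\leftthreetimes I - \mathbb{T}$ is of Fredholm type, the equivalence noted in Remark \ref{Re1} gives $p(\leftthreetimes I - \mathbb{T}) < \infty$, and finite ascent of an upper semi-Fredholm operator forces $\text{ind}(\leftthreetimes I - \mathbb{T}) \leq 0$ (precisely the implication already used in the paragraph introducing property $(E_A)$). Hence $\leftthreetimes I - \mathbb{T}$ is upper semi-Weyl, i.e., $\leftthreetimes \notin \mathfrak{S}_{uw}(\mathbb{T})$, which completes the reverse inclusion and therefore the identity $\mathfrak{S}_{uf}(\mathbb{T}) = \mathfrak{S}_{uw}(\mathbb{T})$.

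This argument is essentially a packaging of standard implications, so I do not anticipate a serious obstacle. The only point requiring care is the logical direction: one must use the hypothesis as a statement about the complement of $\mathfrak{S}_{uf}(\mathbb{T})$ and check that the SVEP-to-ascent-to-index chain is applied exactly at those $\leftthreetimes$ lying outside $\mathfrak{S}_{uf}(\mathbb{T})$, where the upper semi-Fredholm hypothesis needed to pass from $p(\leftthreetimes I - \mathbb{T}) < \infty$ to $\text{ind}(\leftthreetimes I - \mathbb{T}) \leq 0$ is indeed in force.
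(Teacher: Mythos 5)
Your proof is correct and follows essentially the same route as the paper: both arguments take $\leftthreetimes$ outside $\mathfrak{S}_{uf}(\mathbb{T})$ (hence, by the hypothesis, outside $Acc(\mathfrak{S}_{a}(\mathbb{T}))$), deduce the SVEP at $\leftthreetimes$, pass to finite ascent and nonpositive index for the upper semi-Fredholm operator $\leftthreetimes I-\mathbb{T}$, and conclude $\mathfrak{S}_{uw}(\mathbb{T})=\mathfrak{S}_{uf}(\mathbb{T})$ via Theorem \ref{T1.6}. The only cosmetic difference is that the paper first proves the unconditional inclusion $\mathfrak{S}_{uw}(\mathbb{T})\subseteq\mathfrak{S}_{uf}(\mathbb{T})\cup Acc(\mathfrak{S}_{a}(\mathbb{T}))$ and invokes the hypothesis at the end, whereas you fold the hypothesis in at the outset.
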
 
\begin{proof}
  
Let $\leftthreetimes \notin$ \esuf $ \cup  Acc(\mathfrak{S}_{a}(\mathbb{T}))$,
then $\leftthreetimes \in$ (\esuf)$^{c}$ $\cap $ \isoesa,  which implies that $(\leftthreetimes I-\mathbb{T})$ an upper semi-Fredholm operator and that $\mathbb{T}$ has SVEP at $\leftthreetimes$, meaning that $p(\leftthreetimes I-\mathbb{T})<+\infty$. Thus, $\leftthreetimes \notin$ \esuw. Therefore, $\mathfrak{S}_{uw}(\mathbb{T}) \subseteq   \mathfrak{S}_{uf}(\mathbb{T})\cup Acc(\mathfrak{S}_{a}(\mathbb{T}))$. The hypothesis implies that $\mathfrak{S}_{uw}(\mathbb{T}) =   \mathfrak{S}_{uf}(\mathbb{T})$. So that,  $\mathbb{T}$ verifies property $(E_A)$. 
\end{proof}

In view of Theorem \ref{T4.2}. If for $\mathbb{T} \in \mathbb{L }(\mathbb{X})$, it turn out that $\mathbb{T^*}$, or $\mathbb{T}$ does not verify the property $(E_A)$, then $Acc(\mathfrak{S}(\mathbb{T})) \neq \emptyset$.
\begin{Cor}
Let be $\mathbb{T} \in \mathbb{L }(\mathbb{X})$. Then: 
\begin{enumerate}[\upshape (i)]

    \item   If  $Acc(\mathfrak{S}_{s}(\mathbb{T})) \subseteq   \mathfrak{S}_{lf}(\mathbb{T})$, then $\mathbb{T^*}$ verifies property $(E_A)$.

    \item   If  $Acc(\mathfrak{S}_{s}(\mathbb{T})) = \emptyset$, then $\mathbb{T^*}$ verifies property $(E_A)$.
    
     \item  If  $Acc(\mathfrak{S}_{a}(\mathbb{T})) = \emptyset$, then  $\mathbb{T}$ verifies
     property $(E_A)$.

      \item  If  $Acc(\mathfrak{S}(\mathbb{T})) = \emptyset$, then  $\mathbb{T}$ and  $\mathbb{T^*}$ verify the property $(E_A)$.
\end{enumerate}

\end{Cor}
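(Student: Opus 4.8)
The plan is to reduce every item to Theorem \ref{T4.2}, using only the elementary monotonicity of the accumulation-point operator together with the standard duality identities for the spectra involved. Throughout I would keep in mind the two set-inclusions $\mathfrak{S}_{uw}(\mathbb{T}) \subseteq \mathfrak{S}_a(\mathbb{T}) \subseteq \mathfrak{S}(\mathbb{T})$ and $\mathfrak{S}_s(\mathbb{T}) \subseteq \mathfrak{S}(\mathbb{T})$, and the trivial observation that $\emptyset$ is contained in every set.

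For item (iii) there is essentially nothing to do: the hypothesis $Acc(\mathfrak{S}_a(\mathbb{T})) = \emptyset$ yields at once $Acc(\mathfrak{S}_a(\mathbb{T})) \subseteq \mathfrak{S}_{uf}(\mathbb{T})$, so Theorem \ref{T4.2} applies verbatim and $\mathbb{T}$ verifies $(E_A)$. For items (i) and (ii) I would apply Theorem \ref{T4.2} not to $\mathbb{T}$ but to the bounded operator $\mathbb{T^*} \in \mathbb{L}(\mathbb{X}^{*})$. The key inputs are the classical duality identities $\mathfrak{S}_a(\mathbb{T^*}) = \mathfrak{S}_s(\mathbb{T})$ and $\mathfrak{S}_{uf}(\mathbb{T^*}) = \mathfrak{S}_{lf}(\mathbb{T})$ (see \cite{Aiena}). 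Under these identifications, the hypothesis $Acc(\mathfrak{S}_s(\mathbb{T})) \subseteq \mathfrak{S}_{lf}(\mathbb{T})$ of (i) is precisely the hypothesis $Acc(\mathfrak{S}_a(\mathbb{T^*})) \subseteq \mathfrak{S}_{uf}(\mathbb{T^*})$ of Theorem \ref{T4.2} read for $\mathbb{T^*}$; hence $\mathbb{T^*}$ verifies $(E_A)$. Item (ii) is then the particular case $Acc(\mathfrak{S}_s(\mathbb{T})) = \emptyset$, since $\emptyset \subseteq \mathfrak{S}_{lf}(\mathbb{T})$.

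Finally, item (iv) combines the previous two through the monotonicity $A \subseteq B \Rightarrow Acc(A) \subseteq Acc(B)$. Because $\mathfrak{S}_a(\mathbb{T}) \subseteq \mathfrak{S}(\mathbb{T})$ and $\mathfrak{S}_s(\mathbb{T}) \subseteq \mathfrak{S}(\mathbb{T})$, the hypothesis $Acc(\mathfrak{S}(\mathbb{T})) = \emptyset$ forces both $Acc(\mathfrak{S}_a(\mathbb{T})) = \emptyset$ and $Acc(\mathfrak{S}_s(\mathbb{T})) = \emptyset$; then (iii) gives $(E_A)$ for $\mathbb{T}$ and (ii) gives $(E_A)$ for $\mathbb{T^*}$.

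The only genuinely non-formal point—the single place where care is needed—is the invocation of the two duality identities for the spectra; everything else is pure set inclusion. I would therefore state those identities explicitly with a precise citation before running the argument, and double-check that Theorem \ref{T4.2} is quoted for a general operator on a general complex Banach space, so that it legitimately applies to $\mathbb{T^*}$ acting on $\mathbb{X}^{*}$.
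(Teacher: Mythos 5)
Your proposal is correct and follows exactly the route the paper intends: the corollary is stated as an immediate consequence of Theorem \ref{T4.2}, obtained for items (i) and (ii) by applying that theorem to $\mathbb{T}^*$ via the duality identities $\mathfrak{S}_a(\mathbb{T}^*)=\mathfrak{S}_s(\mathbb{T})$ and $\mathfrak{S}_{uf}(\mathbb{T}^*)=\mathfrak{S}_{lf}(\mathbb{T})$, and for (iii) and (iv) by the trivial inclusion of the empty set and the monotonicity of $Acc(\cdot)$. Nothing is missing; your explicit care about stating the duality identities only makes the argument cleaner than the paper's unproved statement.
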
 
 
\begin{Example}
   Operators that have a power of finite rank are algebraic; the spectrum of an algebraic operator is a finite set. Thus, the set of accumulated points of the spectrum is empty. Therefore, these operators verify the property  $(E_A)$.
\end{Example}

The following result, for $\mathbb{T} \in \mathbb{L }(\mathbb{X})$, indicates the difference between $\mathfrak{S}_{uf}(\mathbb{T})$ and $ \mathfrak{S}_{ubf}(\mathbb{T})$, if the operator verifies property $(E_A)$.
 
\begin{Th}
Let be $\mathbb{T} \in \mathbb{L }(\mathbb{X})$. If $\mathbb{T}$  verifies property $(E_A)$, then $$\mathfrak{S}_{uf}(\mathbb{T}) =    \mathfrak{S}_{ubf}(\mathbb{T})\cup Acc(\mathfrak{S}_{uf}(\mathbb{T})).$$
\end{Th}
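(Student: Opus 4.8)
The plan is to prove the stated equality by a double inclusion, isolating at the outset which direction carries the real content. The inclusion $\mathfrak{S}_{ubf}(\mathbb{T}) \cup Acc(\mathfrak{S}_{uf}(\mathbb{T})) \subseteq \mathfrak{S}_{uf}(\mathbb{T})$ requires no hypothesis: every upper semi-Fredholm operator is upper semi $B$-Fredholm (take $n = 0$ in the definition), so $\mathfrak{S}_{ubf}(\mathbb{T}) \subseteq \mathfrak{S}_{uf}(\mathbb{T})$, while $\mathfrak{S}_{uf}(\mathbb{T})$ is closed in $\mathbb{C}$ and therefore absorbs its own accumulation points, giving $Acc(\mathfrak{S}_{uf}(\mathbb{T})) \subseteq \mathfrak{S}_{uf}(\mathbb{T})$. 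Hence the burden of the proof lies entirely in the reverse inclusion $\mathfrak{S}_{uf}(\mathbb{T}) \subseteq \mathfrak{S}_{ubf}(\mathbb{T}) \cup Acc(\mathfrak{S}_{uf}(\mathbb{T}))$, which is where property $(E_A)$ must intervene.

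For the reverse inclusion I would argue on the isolated points of $\mathfrak{S}_{uf}(\mathbb{T})$: it suffices to show that if $\leftthreetimes_0 \in \mathfrak{S}_{uf}(\mathbb{T})$ is not an accumulation point of $\mathfrak{S}_{uf}(\mathbb{T})$, then $\leftthreetimes_0 \in \mathfrak{S}_{ubf}(\mathbb{T})$. Since $\mathbb{T}$ verifies property $(E_A)$, Theorem \ref{T1.6} yields $\mathfrak{S}_{uf}(\mathbb{T}) = \mathfrak{S}_{uw}(\mathbb{T})$ and Lemma \ref{T1.44} yields $\mathfrak{S}_{ubf}(\mathbb{T}) = \mathfrak{S}_{ubw}(\mathbb{T})$, so the target is equivalent to $\leftthreetimes_0 \in \mathfrak{S}_{ubw}(\mathbb{T})$ and I may work entirely with the semi-Weyl and semi $B$-Weyl spectra. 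Because $\leftthreetimes_0$ is isolated in $\mathfrak{S}_{uf}(\mathbb{T})$, there is a punctured disc $\mathbb{D} \setminus \{\leftthreetimes_0\}$ throughout which $\leftthreetimes \mathbb{I} - \mathbb{T}$ is upper semi-Fredholm, and on it the index is locally constant; property $(E_A)$ forces this common index to be $\leq 0$. I would then feed this index information, together with the single-valued extension property supplied by Remark \ref{Re1} at the relevant points, into the punctured-neighborhood theorem \cite[Theorem 1.117]{Aiena} to decide the $B$-Fredholm status of $\leftthreetimes_0 \mathbb{I} - \mathbb{T}$.

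The hard part, and the step I expect to be the genuine obstacle, is exactly this last decision. The punctured-neighborhood theorem runs in the opposite direction to what is wanted: if $\leftthreetimes_0 \mathbb{I} - \mathbb{T}$ were upper semi $B$-Fredholm it would itself generate a punctured disc of upper semi-Fredholm points, so the containment $Acc(\mathfrak{S}_{uf}(\mathbb{T})) \subseteq \mathfrak{S}_{ubf}(\mathbb{T})$ holds unconditionally, whereas what is needed here is that the isolated points of $\mathfrak{S}_{uf}(\mathbb{T})$ land inside $\mathfrak{S}_{ubf}(\mathbb{T})$. Moreover, the natural SVEP-and-ascent analysis at an isolated point tends to certify finite ascent of $\leftthreetimes_0 \mathbb{I} - \mathbb{T}$ and hence to push $\leftthreetimes_0$ into the complement of $\mathfrak{S}_{ubw}(\mathbb{T}) = \mathfrak{S}_{ubf}(\mathbb{T})$, in the opposite sense to the claim. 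Consequently the crux is to control, under property $(E_A)$, precisely those isolated points of $\mathfrak{S}_{uf}(\mathbb{T})$ at which $\mathbb{T}$ is upper semi $B$-Fredholm; I would concentrate the remaining work on reconciling the local index and ascent data at $\leftthreetimes_0$ with the desired membership in $\mathfrak{S}_{ubf}(\mathbb{T})$, as this is the only point where the argument is not routine.
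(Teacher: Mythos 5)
Your preliminary reductions are sound, and one of them already decides the matter: as you observe, the punctured-neighborhood theorem \cite[Theorem 1.117]{Aiena} gives $Acc(\mathfrak{S}_{uf}(\mathbb{T}))\subseteq \mathfrak{S}_{ubf}(\mathbb{T})$ for \emph{every} operator, so the right-hand side of the claimed equality always collapses to $\mathfrak{S}_{ubf}(\mathbb{T})$ and the theorem is really asserting $\mathfrak{S}_{uf}(\mathbb{T})=\mathfrak{S}_{ubf}(\mathbb{T})$ whenever $(E_A)$ holds. The step you defer as ``the genuine obstacle'' --- that every isolated point of $\mathfrak{S}_{uf}(\mathbb{T})$ lies in $\mathfrak{S}_{ubf}(\mathbb{T})$ --- is therefore not merely hard, it is false, and your proposal never proves it. The zero operator $\mathbf{0}$ on $\ell^{2}(\mathbb{N})$ verifies $(E_A)$ and has $\mathfrak{S}_{uf}(\mathbf{0})=\{0\}$, $\mathfrak{S}_{ubf}(\mathbf{0})=\emptyset$ and $Acc(\{0\})=\emptyset$; the operator $\mathbb{T}=I\oplus\mathbb{S}$ of Example \ref{exam2} gives $\mathfrak{S}_{uf}(\mathbb{T})=\{0,1\}$ against $\mathfrak{S}_{ubf}(\mathbb{T})\cup Acc(\mathfrak{S}_{uf}(\mathbb{T}))=\{0\}$. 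Both appear in the paper itself as examples of operators with property $(E_A)$. Your closing remark --- that SVEP and finite ascent at an isolated spectral point push $\leftthreetimes_{0}$ \emph{out of} $\mathfrak{S}_{ubw}(\mathbb{T})$ rather than into it --- is exactly the correct diagnosis of why no argument can close this gap.

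For comparison, the paper's proof is a two-line substitution: it invokes an identity $\mathfrak{S}_{uw}(\mathbb{T})=\mathfrak{S}_{ubw}(\mathbb{T})\cup Acc(\mathfrak{S}_{uw}(\mathbb{T}))$ attributed to \cite[Theorem 3.55]{Aiena} and then replaces $\mathfrak{S}_{uw}$ by $\mathfrak{S}_{uf}$ and $\mathfrak{S}_{ubw}$ by $\mathfrak{S}_{ubf}$ via Theorem \ref{T1.6} and Lemma \ref{T1.44}, exactly the substitutions you also make. But that identity, as transcribed, already fails for $\mathbf{0}$: the only unconditional relations are $Acc(\mathfrak{S}_{uw}(\mathbb{T}))\subseteq\mathfrak{S}_{ubw}(\mathbb{T})\subseteq\mathfrak{S}_{uw}(\mathbb{T})$, and the last inclusion can be strict. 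So neither your attempt nor the paper's argument establishes the statement; the inclusion $\mathfrak{S}_{uf}(\mathbb{T})\subseteq\mathfrak{S}_{ubf}(\mathbb{T})\cup Acc(\mathfrak{S}_{uf}(\mathbb{T}))$ would require an additional hypothesis ruling out points of $\mathfrak{S}_{uf}(\mathbb{T})$ at which $\leftthreetimes\mathbb{I}-\mathbb{T}$ is upper semi $B$-Fredholm.
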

\begin{proof}
   By  \cite[Theorem 3.55]{Aiena}, it turns out that $\mathfrak{S}_{uw}(\mathbb{T}) =    \mathfrak{S}_{ubw}(\mathbb{T})\cup Acc(\mathfrak{S}_{uw}  (\mathbb{T}))$ since \(\T\) satisfies \((E_A)\) so by Theorem 1.6 and Theorem 1.5, we have that 
   \(\mathfrak{S}_{uf}(\T) = \mathfrak{S}_{uw}(\T)\) and \(\mathfrak{S}_{ubw}(\T) = \mathfrak{S}_{ubf}(\T)\), and so we get the result. 
\end{proof}
   \begin{Cor}
Let be $\mathbb{T} \in \mathbb{L }(\mathbb{X})$. If    $Acc(\mathfrak{S}_{a}(\mathbb{T})) \subseteq   \mathfrak{S}_{uf}(\mathbb{T})$, then $$\mathfrak{S}_{uf}(\mathbb{T}) =    \mathfrak{S}_{ubf}(\mathbb{T})\cup Acc(\mathfrak{S}_{uf}(\mathbb{T})).$$
\end{Cor}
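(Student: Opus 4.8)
The plan is to recognize this corollary as an immediate consequence of the two results that precede it, with no new argument required. The hypothesis $Acc(\mathfrak{S}_a(\mathbb{T})) \subseteq \mathfrak{S}_{uf}(\mathbb{T})$ is, verbatim, the hypothesis of Theorem \ref{T4.2}. So the first step is simply to invoke Theorem \ref{T4.2} and conclude that $\mathbb{T}$ verifies property $(E_A)$.

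Once property $(E_A)$ is available, the asserted identity $\mathfrak{S}_{uf}(\mathbb{T}) = \mathfrak{S}_{ubf}(\mathbb{T}) \cup Acc(\mathfrak{S}_{uf}(\mathbb{T}))$ is precisely the conclusion of the theorem stated immediately before this corollary. Hence the second and final step is to apply that theorem to $\mathbb{T}$. The whole proof is therefore the short chain: hypothesis $\Rightarrow$ (by Theorem \ref{T4.2}) property $(E_A)$ $\Rightarrow$ (by the preceding theorem) the desired decomposition of $\mathfrak{S}_{uf}(\mathbb{T})$.

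I do not expect any genuine obstacle, since all the substantive content lives inside the two results being chained: Theorem \ref{T4.2}, whose proof exploits that a point of $\mathfrak{S}_a(\mathbb{T})$ isolated there and lying off $\mathfrak{S}_{uf}(\mathbb{T})$ forces SVEP, hence finite ascent and index $\le 0$; and the preceding theorem, which rests on \cite[Theorem 3.55]{Aiena} together with Theorem \ref{T1.6} and Lemma \ref{T1.44}. The only point worth checking is that the hypotheses of Theorem \ref{T4.2} and of the present corollary coincide exactly, which they do, so the deduction is purely formal.
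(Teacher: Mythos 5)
Your proposal is correct and is exactly the intended argument: the paper states this as an immediate corollary (with no written proof) of Theorem \ref{T4.2}, which converts the hypothesis $Acc(\mathfrak{S}_{a}(\mathbb{T})) \subseteq \mathfrak{S}_{uf}(\mathbb{T})$ into property $(E_A)$, combined with the theorem immediately preceding the corollary. Nothing further is needed.
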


  The $\mathfrak{S}_{a}(\mathbb{T})$ does not necessarily coincide with $  \mathfrak{S}_{uf}(\mathbb{T})$; this happens for a large number of operators, for example, the Unilateral Left Shift operator has $\mathfrak{S}_{a}(\mathbb{L})=D(0,1)\neq \mathfrak{S}_{uf}(\mathbb{L}) =\Gamma(0,1)$, see Example \ref{e3.5}. It is guaranteed, for $\mathbb{T} \in \mathbb{L }(\mathbb{X})$, that if $\mathfrak{S}_{a}(\mathbb{T}) = \mathfrak{S}_{uf}(\mathbb{T})$, so the operator verifies the property $(bz)$, and hence $(E_A)$. We see this follow.
  
\begin{Th}\label{c11}
 Let be $\mathbb{T} \in \mathbb{L }(\mathbb{X})$.  If   $\mathfrak{S}_{a}(\mathbb{T}) = \mathfrak{S}_{uf}(\mathbb{T})$, then $\mathbb{T}$  verifies  property $(bz)$.
\end{Th}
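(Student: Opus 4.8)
The plan is to sandwich the upper semi-Browder spectrum between the two sets appearing in the hypothesis, using only the elementary inclusions among the spectra together with the characterization of property $(bz)$ recalled just above. Recall that property $(bz)$ holds precisely when $\mathfrak{S}_{uf}(\mathbb{T}) = \mathfrak{S}_{ub}(\mathbb{T})$, and that one always has $\mathfrak{S}_{uf}(\mathbb{T}) \subseteq \mathfrak{S}_{ub}(\mathbb{T})$ since every upper semi-Browder operator is, in particular, upper semi-Fredholm. Thus it suffices to establish the reverse inclusion $\mathfrak{S}_{ub}(\mathbb{T}) \subseteq \mathfrak{S}_{uf}(\mathbb{T})$ under the assumption $\mathfrak{S}_{a}(\mathbb{T}) = \mathfrak{S}_{uf}(\mathbb{T})$.

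First I would take $\leftthreetimes \notin \mathfrak{S}_{uf}(\mathbb{T})$. By the hypothesis $\mathfrak{S}_{a}(\mathbb{T}) = \mathfrak{S}_{uf}(\mathbb{T})$, this is equivalent to $\leftthreetimes \notin \mathfrak{S}_{a}(\mathbb{T})$, that is, $\leftthreetimes \mathbb{I} - \mathbb{T}$ is bounded below, hence injective with closed range. In particular $\alpha(\leftthreetimes \mathbb{I} - \mathbb{T}) = 0$; and since an injective operator has injective powers, $\text{ker}((\leftthreetimes \mathbb{I} - \mathbb{T})^n) = \{0\}$ for every $n$, so the ascent satisfies $p(\leftthreetimes \mathbb{I} - \mathbb{T}) = 0 < \infty$. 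Combined with the fact that $\leftthreetimes \mathbb{I} - \mathbb{T}$ is upper semi-Fredholm (closed range and $\alpha < \infty$), this shows that $\leftthreetimes \mathbb{I} - \mathbb{T}$ is an upper semi-Browder operator, i.e. $\leftthreetimes \notin \mathfrak{S}_{ub}(\mathbb{T})$.

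This yields $\mathfrak{S}_{ub}(\mathbb{T}) \subseteq \mathfrak{S}_{uf}(\mathbb{T})$, and together with the trivial reverse inclusion gives $\mathfrak{S}_{uf}(\mathbb{T}) = \mathfrak{S}_{ub}(\mathbb{T})$, which is exactly property $(bz)$.

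I do not expect a genuine obstacle here: the only content is recognizing that a bounded-below operator automatically has ascent zero and is therefore upper semi-Browder, so that the hypothesis collapses the whole chain $\mathfrak{S}_{uf}(\mathbb{T}) \subseteq \mathfrak{S}_{uw}(\mathbb{T}) \subseteq \mathfrak{S}_{ub}(\mathbb{T}) \subseteq \mathfrak{S}_{a}(\mathbb{T})$ to a single set. As a byproduct, since property $(bz)$ implies property $(E_A)$ by the discussion preceding this theorem, the same hypothesis also forces $\mathbb{T}$ to verify property $(E_A)$.
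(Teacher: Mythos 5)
Your proof is correct and is essentially the paper's own argument: both sandwich $\mathfrak{S}_{ub}(\mathbb{T})$ in the chain $\mathfrak{S}_{uf}(\mathbb{T}) \subseteq \mathfrak{S}_{ub}(\mathbb{T}) \subseteq \mathfrak{S}_{a}(\mathbb{T})$ and let the hypothesis collapse it. The only difference is that you verify by hand the inclusion $\mathfrak{S}_{ub}(\mathbb{T}) \subseteq \mathfrak{S}_{a}(\mathbb{T})$ (bounded below $\Rightarrow$ ascent zero $\Rightarrow$ upper semi-Browder), which the paper simply cites as well known.
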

\begin{proof}
    It is a well known fact that \(\mathfrak{S}_{uf}(\T) \subseteq \mathfrak{S}_{ub}(\T) \subseteq \mathfrak{S}_{a}(\T) \), so using the hypothesis, we have that \( \mathfrak{S}_{uf}(\T) = \mathfrak{S}_{ub}(\T)\), which implies that   \(\T\) satisfies the  property \((bz)\).
\end{proof}

\begin{Cor}\label{c1122}
 Let be $\mathbb{T} \in \mathbb{L }(\mathbb{X})$.  If   $\mathfrak{S}_{a}(\mathbb{T}) = \mathfrak{S}_{uf}(\mathbb{T})$, then $\mathbb{T}$  verifies  property $(E_A)$.
\end{Cor}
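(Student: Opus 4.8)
The plan is to deduce this corollary directly from the theorem immediately preceding it, together with an inclusion chain already recorded at the opening of Section~4. First I would invoke Theorem~\ref{c11}: the hypothesis $\mathfrak{S}_{a}(\mathbb{T}) = \mathfrak{S}_{uf}(\mathbb{T})$ has just been shown to force $\mathbb{T}$ to verify property $(bz)$, which by definition amounts to the equality $\mathfrak{S}_{uf}(\mathbb{T}) = \mathfrak{S}_{ub}(\mathbb{T})$.

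Next I would exploit the universally valid inclusions $\mathfrak{S}_{uf}(\mathbb{T}) \subseteq \mathfrak{S}_{uw}(\mathbb{T}) \subseteq \mathfrak{S}_{ub}(\mathbb{T})$ that were recalled at the start of this section when noting that property $(bz)$ implies property $(E_A)$. Since the two outer sets now coincide, a squeeze argument collapses the middle one and yields $\mathfrak{S}_{uf}(\mathbb{T}) = \mathfrak{S}_{uw}(\mathbb{T})$. Finally, Theorem~\ref{T1.6} identifies precisely this equality with property $(E_A)$, and the conclusion follows.

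I expect no genuine obstacle here: the statement is a formal consequence of Theorem~\ref{c11} and the already-established implication ``property $(bz)$ $\Rightarrow$ property $(E_A)$''. The only point that must be stated cleanly is the squeeze between the upper semi-Fredholm and upper semi-Browder spectra that pins down the upper semi-Weyl spectrum; everything else is bookkeeping. In fact the proof can be compressed to a single line, simply citing Theorem~\ref{c11} and the inclusion chain, so the main effort is choosing the most economical phrasing rather than overcoming any mathematical difficulty.
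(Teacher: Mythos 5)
Your argument is correct and is exactly the route the paper takes: Corollary~\ref{c1122} is presented as an immediate consequence of Theorem~\ref{c11} together with the inclusion chain $\mathfrak{S}_{uf}(\mathbb{T}) \subseteq \mathfrak{S}_{uw}(\mathbb{T}) \subseteq \mathfrak{S}_{ub}(\mathbb{T})$ recorded at the start of Section~4, which is precisely your squeeze. Nothing further is needed.
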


  \begin{Example} It is known that for $R$, the Unilateral Right Shift operator, it turn out that  $\mathfrak{S}_{a}(R) = \Gamma(0,1)$. Then by Example \ref{e3.5} it is obtained that $\mathfrak{S}_{a}(R)= \mathfrak{S}_{uf}(R)$. Also, if $\mathbb{T}$ is the operator indent $I$ in $\mathbb{L }(\mathbb{X})$, then $\mathfrak{S}_{a}(\mathbb{T})= \mathfrak{S}_{uf}(\mathbb{T}) = \{1\}$. Similarly, it is happen for the operator defined as;  $P(x_1,x_2,x_3,...)=(\frac{x_2}{2}, \frac{x_3}{3},...), \text{  for all } (x_n)\in \ell^2(\mathbb{N}).$ In this case $\mathfrak{S}_{a}(P)= \{0\}$. Also, another example is the Cesàro operator defined in Example \ref{ejem11}. Therefore, by corollary \ref{c1122}, all these operators verify property $(E_A)$.
\end{Example}

\section{Conclusions}

The property   $(E_A)$   collects in a way operators, with closed range, which are not injective and have a negative Weyl's index. But if an operator, we say  $\mathbb{T} \in \mathbb{L }(\mathbb{X})$,   does not verifies the property $(E_A)$, then:
\begin{enumerate}[\upshape (i)]

\item  $ \mathfrak{S}_{uf}(\mathbb{T}) \neq\mathfrak{S}_{uw}(\mathbb{T}).$

\item  $ \mathfrak{S}_{ubf}(\mathbb{T}) \neq \mathfrak{S}_{ubw}(\mathbb{T}).$

    \item $\mathfrak{S}_{a}(\mathbb{T}) \neq \mathfrak{S}_{uf}(\mathbb{T})$.

    \item $\rho_{uf}(\mathbb{T})$ is not connected.

  \item $Acc(\mathfrak{S}_{a}(\mathbb{T})) \neq \emptyset.$ 

  \item  $Acc(\mathfrak{S}_{a}(\mathbb{T})) \cap   \rho_{uf}(\mathbb{T})\neq \emptyset.$

   \item  $\mathbb{T}$ does not verifies the property $(bz)$.
    
\end{enumerate}


\begin{thebibliography}{9}

\bibitem{Aiena} Aiena, P. \emph{Fredholm and local spectral theory II with applications to Weyl-type Theorems}. {Springer}, \textbf{2018}.

\bibitem{AN} Aiena, P. {Quasi-Fredholm operators and localized $\mathcal{SVEP}$}. \emph{Acta Mathematica Scientia}, {73}, 251--263, \textbf{2007}.

\bibitem{Aiena2} Aiena, P. 
\emph{Fredholm and Local Spectral Theory, with Applications to Multipliers}. Kluwer Academic Publishers: Dordrecht, The Netherlands, \textbf{2004}.

\bibitem{AAG} Aiena, P., Aponte, E., and Guill\'en, J. {The Zariouh's property $(gaz)$ through localized SVEP}. \emph{Mat. Vesnik}, 72(4), 314--326, \textbf{2020}.

\bibitem{AGP2} Aiena, P., Guill{\'e}n, J., and Pe\~na, P. {Property (gb) through local spectral theory}. \emph{Math. Proc. R. Ir. Acad.}, 114(1), 1--15, \textbf{2014}.

\bibitem{AGP1} Aiena, P., Guill{\'e}n, J., and Pe\~na, P. {Localized SVEP, property $(b)$ and property $(ab)$.}  \emph{Mediterranean Journal of Mathematics}, 10(4), 1965--1978, \textbf{2013}.

\bibitem{AJQV} Aponte, E., Jayanthi, N., Quiroz, D., and Vasanthakumar, P. {Tensor Product of Operators Satisfying Zariouh’s Property $(gaz)$, and Stability under Perturbations}. \emph{Axioms}, {11}(5), 225, \textbf{2022}.

\bibitem{uuu} Aponte, E., Mac\'ias, J., Sanabria, J., and Soto, J. {Further characterizations of property $(V_{\Pi})$ and some applications}. \emph{Proyecciones},  39(6),  1435--1456, \textbf{2020}.

\bibitem{EJE} Aponte, E., Soto, J., and Rosas E. {Study of the property (bz) using local spectral theory methods}. \emph{Arab Journal of Basic and Applied Sciences}, {30}(1), 665--674, \textbf{2023}.

\bibitem{Tensor2022} Aponte, E., Vasanthakumar, P., and Jayanthi, N. {Property $(Bv)$ and Tensor Product}. \emph{Symmetry}, {14}(10), 2011, \textbf{2022}.

\bibitem{ouidren2} Ben Ouidren, K. and Zariouh, H. {New approach to a-Weyl's theorem through localized SVEP and Riesz-type perturbations}. \emph{Linear and Multilinear Algebra}, 70(17), 3231--3247, \textbf{2022}.

\bibitem{new2}
{Ben Ouidren, K. and Zariouh, H. {Extended Rakocevic property.} \emph{Funct. Anal. Approx. Comput.}, 13(1), 27--34, \textbf{(2021)}.}

\bibitem{Ouidren222} Ben Ouidren, K. and Zariouh, H. {New approach to a-Weyl's theorem and some preservation results.} \emph{Rend. Circ. Mat. Palermo Series 2}, 70, 819--833, \textbf{2021}.

\bibitem{Chen} Chen, Y., Chi, Y., Fan, J., and Ma, C. {Spectral methods for data science: A statistical perspective}. \emph{Foundations and Trends in Machine Learning}, 14(5), 566--806, \textbf{2021}.

\bibitem{Cob} Coburn, L. A.   Weyl's theorem for nonnormal operators. \emph{Mich. Math. J.}, {13}, 285--288, \textbf{1966}.

\bibitem{Han} Djordjevi\'{c}, S.V. and Han, Y.M. {Browder's theorem and spectral continuity.} \emph{Glasgow Math. J.}, 42(3), 479--486, \textbf{2000}.

\bibitem{Fi} Finch, J. K. {The single valued extension property on a Banach space.} \emph{Pacific J. Math.}, {58}(1), 61--69, \textbf{1975}.

\bibitem{mg} González, M. {The fine spectrum of the Cesàro operator in  $\ell_p$ ($1<p<\infty $)}. \emph{Arch. Math.}, 44, 355--358,  
 \textbf{1985}.
 
\bibitem{R.Harte} Harte, R. and Lee, W.Y. {Another note on Weyl's theorem.} \emph{Trans. Amer. Math. Soc.}, 349(5), 2115--2124, \textbf{1997}.

\bibitem{roa} Rhoades, B.E. and Neumann, M. M. {Spectra of some Hausdorff operators}. \emph{Acta Sci. Math. (Szeged)}, 32, 91--100, \textbf{1971}.

\bibitem{SANA0b} Sanabria, J., V\'asquez, L., Carpintero, C., Rosas, E., and Garc\'ia, O. {On strong variations of Weyl type theorems}. \emph{Acta Math. Univ. Comen. (N.S.)}, {86}(2), 345--356, \textbf{2017}.

\bibitem{Saul} Saul, L. K., Weinberger, K. Q., Ham, J. H., Sha, F., and Lee, D. D. {Spectral methods for dimensionality reduction}. \emph{Semi-supervised learning}, {3}, \textbf{(1966)}.

\bibitem{weyl} Weyl, H. {{\"U}ber beschr{\"a}nkte quadratiche Formen, deren Differenz vollsteig ist.} \emph{Rend. Circ. Mat. Palermo}, 27(1), 373--92, \textbf{1909}.




 
 
\end{thebibliography}
\end{document}